\numberwithin{equation}{section}
\newtheorem{lemma}{Lemma}[section]
\newtheorem{theorem}{Theorem}[section]
\newtheorem{corollary}{Corollary}[section]
\newtheorem{remark}{Remark}[section]
\newtheorem*{note}{Note}
\newtheorem{conjecture}{Conjecture}
\newtheorem*{problem}{Problem}
\newtheorem{proposition}{Proposition}
\numberwithin{equation}{section}
\DeclareMathOperator{\Log}{Log}
\title{Pick Functions Related to the Multiple Gamma Functions of order $n$}
\author[]{Sourav Das}
\address{Department of Mathematics, Indian Institute of Technology, \newline Roorkee-247 667, Uttarakhand, India }
\email{das90dma@iitr.ac.in}
\author[]{A. Swaminathan}
\address{Department of Mathematics, Indian Institute of Technology, \newline Roorkee-247 667, Uttarakhand, India }
\email{swamifma@iitr.ac.in}
\subjclass[2010]{30E20, 33B15}
\begin{document}
\begin{abstract}
Let $G_n$ be the Barnes multiple Gamma function of order $n$ and the function $f_n(z)$ be defined as
\begin{align*}
f_n(z)=\dfrac{\log G_n(z+1)}{z^n\Log z},\quad z\in \mathbb{C}\setminus (-\infty,0].
\end{align*}
In this work, a conjecture to find the Stieltjes representation
is proposed such that $f_n(z)$ is a Pick function.
The conjecture is established for the particular case $n=3$ by examining
the properties of $f_3(z)$.
\end{abstract}
\keywords{Multiple Gamma function, Pick function, Stieltjes integral representation, logarithm function}
\subjclass[2010]{33B15, 41A60}

\maketitle

\pagestyle{myheadings} \markboth{Sourav Das and A. Swaminathan}
{Pick functions related to Multiple Gamma functions of order $n$}

\section{Introduction}
The class of functions $g(z)=u(z)+iv(z)$ analytic in the upper-half plane with positive imaginary part are
called Pick class \cite[p. 18]{Donoghue74_monograph} and denoted by $\mathbb{P}$.
A Pick function $f(z)\in\mathbb{P}$  can be extended by reflection to holomorphic functions in
$\mathbb{C}\setminus \mathbb{R}$ and they have the integral representation as following
\cite[p. 508]{BergPedersen02_PickGamma}
\begin{align}\label{eqn:Pick-DefnIntgralRep}
f(z)=az+b+\int_{-\infty}^{\infty}\left(\dfrac{1}{t-z} -\dfrac{t}{t^2+1} \right)d\mu(t),
\end{align}
where $a\geq0,\;b\in\mathbb{R}$ and $\mu$ is a nonnegative Borel measure in $\mathbb{R}$ satisfying
\begin{align*}
\int_{-\infty}^{\infty}\dfrac{d\mu(t)}{t^2+1}<\infty,
\end{align*}
with
\begin{align}\label{eqn:PickValue-a-b}
a=&\lim_{y\rightarrow\infty}\dfrac{f(iy)}{(iy)},\quad b=\Re f(i),\\ \label{eqn:PickValue-mu}
\mu=&\lim_{y\rightarrow 0+}\dfrac{\Im f(t+iy)dt}{\pi}.
\end{align}
It is well known that the limit for $\mu$ is in vague topology  \cite[p. 32]{Donoghue74_monograph}
(see also \cite[p. 508]{BergPedersen02_PickGamma}, \cite[p. 96]{Pedersen07_ETNA_PickNegativeZero}).
The reciprocal of a Stieltjes function belongs to $\mathbb{P}$ is observed in \cite[p. 507]{BergPedersen02_PickGamma}.
For more information on Pick functions one can see
\cite{Donoghue74_monograph,BergPedersen01_MonotoneGamma,BergPedersen02_PickGamma,Berg2001_
PickEntropy,BergPedersen11_PickGammaUnitball,Pedersen03_JCAM_doubleGammaPick,Pedersen05_JCAM_CanonicalPick}.

One of the recent interest, among researchers, is to find the members of the class of Pick functions.
For example, the second author posed the problems of finding Pick functions related to generalized polylogarithm
and generalized hypergeometric functions in \cite[p.91]{Berg15_ITSF_OpenProblem}.
Functions that are related to the logarithm of Gamma functions are of special interest.

In 1899, E.W. Barnes \cite{Barnes1899_Gfunction} introduced multiple Gamma functions,
denoted by $\Gamma_n(z)$ or $G_n(z)$ ,
using multiple Hurwitz zeta functions.
The Gamma function of order $n$, denoted by $\Gamma_n$ can be defined by the recurrence relation
\begin{align*}
\Gamma_{n+1}(z+1)=\dfrac{\Gamma_{n+1}(z)}{\Gamma_n(z)}; \quad \Gamma_1(z)=\Gamma(z); \quad \Gamma_n(1)=1.
\end{align*}
Here $\Gamma(z)$ denotes the usual Gamma function, studied by Euler, which is defined by
\begin{align*}
\Gamma(z):=\int_0^{\infty}e^{-t}t^{z-1} dt, \qquad \qquad {\rm Re \,} z>0.
\end{align*}
We remark that equivalent forms of $\Gamma(z)$ exist in the literature, (see \cite{Choi14_MultipleGammaSurvey} for details).

By defining
\begin{align*}
\Gamma_n(z)=( G_n(z) )^{(-1)^{n-1}},\quad n\in \mathbb{N}.
\end{align*}
it is easy to see that
$$
G_1(z)=\Gamma_1(z)=\Gamma(z),\quad G_2(z)=\dfrac{1}{\Gamma_2(z)},\quad G_3(z)=\Gamma_3(z).
$$
These $G_n(z)$'s are known as multiple Gamma functions of order $n$ and studied first in detail
by Vign\'eras \cite{Vigneras78_CRNS_Gamma} in 1978 (see also \cite{Barnes1899_Gfunction,Barnes1904_multipleGamma}).
Note that, in \cite{Barnes1904_multipleGamma} Barnes
had shown that $G_2(z)=\dfrac{1}{\Gamma_2(z)}=G(z)$, where $G(z)$ is the Barnes $G$- function.

Clearly, these $G_n(z)$'s satisfy the recurrence relation
\begin{align}\label{eqn:G-n-RecReln}
G_{n+1}(z+1)=G_{n+1}(z)G_{n}(z), \quad G_n(1)=1, \quad z\in \mathbb{C},\; n\in \mathbb{N}.
\end{align}

Further information about $G_n(z)$, required for the results given in this paper, are provided in Section $\ref{Sec:proof of G-n}$.
For various applications of $G_n(z)$  we refer to \cite{Choi14_MultipleGammaSurvey} and references therein.

Consider the function
\begin{align*}
f_n(z)=\dfrac{\log G_n(z+1)}{z^n\Log z},\quad z\in \mathbb{C}\setminus (-\infty,0],\quad n\in\mathbb{N}.
\end{align*}

Berg and Pedersen already proved that $f_1(z)\in \mathbb{P}$  and found its Stieltjes representation in
\cite{BergPedersen01_MonotoneGamma,BergPedersen02_PickGamma}.
In \cite[p. 364]{Pedersen03_JCAM_doubleGammaPick} Pedersen found the Stieltjes representation of $f_2(z)$ and proved that it
belongs to $\mathbb{P}$.

Motivated by the earlier results on $f_1(z)$ and $f_2(z)$, we are interested in finding the Stieltjes representation for
$f_n(z)$ so that it belongs to the class of Pick functions. The problem we are interested in is given below as a conjecture as
it is challenging to establish this as a result.

\begin{conjecture}\label{conj:f-n-Pick}
The function
\begin{align*}
f_n(z)=\dfrac{\log G_n(z+1)}{z^n\Log z},\quad z\in \mathbb{C}\setminus (-\infty,0], \qquad n\in {\mathbb{N}},
\end{align*}
is a Pick function with Stieltjes representation
$$\dfrac{\log G_n(z+1)}{z^n\Log z}=\dfrac{1}{n!}-\int_0^{\infty}\dfrac{\tilde{d}_n(-t)}{t+z}dt.$$
where
\begin{align} \nonumber
\tilde{d}_n(t)&=\left\{
         \begin{array}{ll}
           -\dfrac{(-1)^{n-1}N_n(|t|)\log|t|+\log|G_n(t+1)|}{t^n((\log|t|)^2+\pi^2)}, & \hbox{$t<0$;} \\
           0, & \hbox{$t\geq0$.}
         \end{array}
       \right.
,\\ \label{eqn:counting-function-Pick-Gamma}
N_n(t)&= \dfrac{[t]([t]+1)([t]+2)\cdots([t]+n-1)}{n!}  \qquad \qquad \mbox{ for } t>0.
\end{align}

\end{conjecture}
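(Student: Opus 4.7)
The plan is to follow the template established by Berg--Pedersen for $n=1$ and by Pedersen for $n=2$: show that $f_n$ extends holomorphically to $\mathbb{C}\setminus(-\infty,0]$ with a finite limit at infinity, and then identify the boundary measure on the cut via the Stieltjes inversion formula. Once the density is shown to be non-negative, the representation $f_n(z)=\frac{1}{n!}-\int_0^\infty\frac{\tilde{d}_n(-t)}{t+z}\,dt$ automatically places $f_n$ in $\mathbb{P}$, because a Cauchy transform of a positive measure supported in $(-\infty,0)$ is a Pick function on $\mathbb{C}\setminus(-\infty,0]$. Holomorphy is immediate from the principal branches of $\log G_n(z+1)$ and $\Log z$, with the apparent singularity at $z=0$ removable via $G_n(1)=1$ and the Barnes recurrence \eqref{eqn:G-n-RecReln}. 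The constant $\frac{1}{n!}$ is isolated from the Vign\'eras/Barnes generalised Stirling asymptotic $\log G_n(z+1)\sim\frac{z^n\log z}{n!}$ as $z\to\infty$ in $|\arg z|<\pi-\delta$, which gives $\lim_{z\to\infty}f_n(z)=\frac{1}{n!}$.

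The density is then extracted by the Stieltjes inversion $\tilde{d}_n(-t)=\tfrac{1}{\pi}\lim_{y\to 0^+}\Im f_n(-t+iy)$. The denominator $(-t+iy)^n\Log(-t+iy)$ tends to $(-1)^n t^n(\log t+i\pi)$. For the numerator, the continuation $1-t+iy$ crosses the non-positive integers $0,-1,\dots,-([t]-1)$, each of which is a zero or pole of $G_n$ with multiplicity $\binom{j+n-1}{n-1}$; since $\Gamma_n=G_n^{(-1)^{n-1}}$, the type alternates with the parity of $n$, so each crossing contributes $(-1)^n i\pi$ times the multiplicity to $\Im\log G_n$. The hockey-stick identity $\sum_{j=0}^{[t]-1}\binom{j+n-1}{n-1}=\binom{[t]+n-1}{n}=N_n(t)$ then gives $\log G_n(1-t+iy)\to\log|G_n(1-t)|+(-1)^n i\pi N_n(t)$. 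Rationalising the quotient and separating the imaginary part reproduces precisely the formula for $\tilde{d}_n(-t)$ in the statement.

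The principal obstacle will be proving positivity of $\tilde{d}_n(-t)$ for all $t>0$, i.e.\ the sign-inequality for the numerator $(-1)^{n-1}N_n(t)\log t+\log|G_n(1-t)|$. This is delicate because $\log|G_n(1-t)|$ has logarithmic singularities at every positive integer and alternates sign on consecutive intervals $(k,k+1)$, while $N_n(t)\log t$ is piecewise constant times $\log t$, so the two contributions must cancel with controlled orientation. A natural approach is induction on $n$ via the Barnes recurrence $G_{n+1}(z+1)=G_{n+1}(z)G_n(z)$, which should translate into a Volterra-type relation between the densities at consecutive orders and let one upgrade positivity from $n$ to $n+1$. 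Alternatively, one may try to realise $f_n$ directly as a Cauchy transform by integrating $\log G_n(w+1)/(w^n\Log w)$ along a Hankel contour around $(-\infty,0]$ and collapsing onto the cut, in which case positivity becomes automatic from the contour orientation rather than requiring a separate inequality. This second route is presumably the mechanism underlying the paper's treatment of the special case $n=3$.
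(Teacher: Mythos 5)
Your proposal reproduces the correct \emph{skeleton} (the same Berg--Pedersen/Pedersen template the paper follows for $n=3$): holomorphy on $\mathbb{C}\setminus(-\infty,0]$, the limit $1/n!$ at infinity, and the boundary density obtained by letting $z\to t\in(-k,-k+1)$ from above, where your hockey-stick computation of $\Im\log G_n$ agrees with the paper's Proposition \ref{Prop::log-Gamma-n-z-t} and Proposition \ref{Prop:Im-f-n-in-k-k+1}. But the statement you were asked to prove is the general conjecture, and your write-up leaves open exactly the two ingredients that the paper itself says it cannot supply for general $n$ (which is why it remains a conjecture there, established only for $n=3$). First, positivity of $\tilde{d}_n$ on each interval $(-k,-k+1)$: you flag it as ``the principal obstacle'' and offer only two unexecuted ideas. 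The suggestion that a Hankel-contour realisation would make positivity ``automatic from the contour orientation'' is not correct: collapsing a contour onto the cut gives you the inversion formula for the density, not its sign; the sign is a genuine inequality about $\log|G_n(t+1)|$ versus $N_n(|t|)\log|t|$, and even for $n=3$ the paper needs a dedicated argument (Lemma \ref{lemma:d3-t-value}, proved by induction on $k$, with the case $k=2$ requiring sign analysis of a cubic, location of its root near $t_0\simeq-1.506$, and a numerical check that $g(t_0)>0$). Your alternative, an induction on $n$ via the recurrence \eqref{eqn:G-n-RecReln} yielding a ``Volterra-type relation between densities,'' is not formulated precisely enough to assess and is not obviously workable, since the recurrence mixes values of $G_n$ across a whole interval of arguments while the densities live on fixed intervals with sign-alternating weights $t^n$.

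Second, the growth input at infinity is asserted rather than proved. You invoke a sectorial Stirling asymptotic $\log G_n(z+1)\sim z^n\log z/n!$ in $|\arg z|<\pi-\delta$, but the only asymptotic available in the paper is the real-axis limit (the Ueno--Nishizawa result, and Lemma \ref{lemma:log-Gamma3-limit-x} for $n=3$); what the Pick-function argument actually needs in the complex plane is a bound of the type $|\log P_{\Gamma_n}(z)|\leq C|z|^n\log|z|$ on a sequence of circles $|z|=r_m$ (the analogue of Lemma \ref{lemma:P-Gamma3-bound}), which is used both to run the maximum-principle argument showing $\Im f_n\geq 0$ in the upper half plane and to conclude that the linear coefficient $a$ in \eqref{eqn:Pick-DefnIntgralRep} vanishes. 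Without that estimate, and without first knowing $f_n\in\mathbb{P}$, your appeal to ``Stieltjes inversion'' to identify the measure is circular: the inversion formula presupposes the Herglotz/Pick representation you are trying to establish. In short, the proposal is a plan that matches the paper's method for $n=3$ but does not close either of the two gaps (positivity of the density for all $n,k$, and the complex growth estimate) that separate the known cases $n\leq 3$ from the general conjecture.
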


As mentioned earlier, this result is established for $n=1$ in \cite{BergPedersen02_PickGamma}(see also \cite{BergPedersen01_MonotoneGamma}
and for $n=2$ in \cite{Pedersen03_JCAM_doubleGammaPick}. In Section $\ref{Sec: triple-Gamma-Pick-results}$, we establish the result for $n=3$.
Even though it is difficult to establish the above conjecture for general $n$, numerical and
graphical evidences suggest that the conjecture is true.
We provide below some results to support our claim. The first one we have in this line is due to Theorem 3 of
\cite[p. 436]{UenoNishizawa97_qMultiGamma} which we state as a lemma.
\begin{lemma}
\begin{align*}
\lim_{n\rightarrow\infty}\dfrac{G_n(x+1)}{x^n\log x}=\dfrac{1}{n!}
\end{align*}
\end{lemma}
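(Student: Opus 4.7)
The plan is to establish the intended asymptotic, namely
\begin{align*}
\lim_{x\to\infty}\dfrac{\log G_n(x+1)}{x^n\log x}=\dfrac{1}{n!},
\end{align*}
which is the version consistent with the definition of $f_n(z)$ and with the constant $1/n!$ appearing in the Stieltjes representation of Conjecture \ref{conj:f-n-Pick} (the statement as displayed appears to contain a typographical slip: the limit should be $x\to\infty$ with $n$ fixed, and the numerator should carry a logarithm). I would proceed by induction on $n$, exploiting the recurrence \eqref{eqn:G-n-RecReln}.

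For the base case $n=1$, the claim reduces to $\log\Gamma(x+1)\sim x\log x$, which is immediate from Stirling's formula. For the inductive step, assume the asymptotic at index $n$. Iterating \eqref{eqn:G-n-RecReln} together with $G_{n+1}(1)=1$ gives, for positive integers $m$,
\begin{align*}
\log G_{n+1}(m+1)=\sum_{k=1}^{m}\log G_n(k).
\end{align*}
Substituting the inductive hypothesis $\log G_n(k)\sim \tfrac{k^n\log k}{n!}$ and comparing with an integral,
\begin{align*}
\sum_{k=1}^{m}\dfrac{k^n\log k}{n!}\sim \int_{1}^{m}\dfrac{t^n\log t}{n!}\,dt\sim \dfrac{m^{n+1}\log m}{(n+1)!},
\end{align*}
delivers the required asymptotic at integer arguments.

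To pass from integer to real $x$, I would use the eventual monotonicity of $\log G_n(x+1)$ for large positive $x$, visible from the product/integral representations of $G_n$ that are collected in Section \ref{Sec:proof of G-n}, and squeeze $\log G_n(x+1)$ between $\log G_n([x]+1)$ and $\log G_n([x]+2)$; since both have leading behaviour $\tfrac{x^n\log x}{n!}$, the limit persists.

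The main obstacle lies in the Cesàro-like step: a bare $o(1)$ error in the inductive hypothesis can accumulate through the sum of length $m$ and threaten the leading term $\tfrac{m^{n+1}\log m}{(n+1)!}$. To control this, I would promote the induction to the quantitative form $\log G_n(x+1)=\tfrac{x^n\log x}{n!}+O(x^n)$, whose error then sums to $O(m^{n+1})=o(m^{n+1}\log m)$ and is absorbed. A shortcut, if the machinery is admissible, is to invoke Barnes' classical asymptotic expansion of $\log G_n(z+1)$, whose leading term is precisely $\tfrac{z^n\log z}{n!}$; this bypasses the inductive bookkeeping entirely.
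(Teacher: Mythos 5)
You are right that the displayed statement contains a typographical slip and that the intended assertion is $\lim_{x\to\infty}\log G_n(x+1)/(x^n\log x)=1/n!$ with $n$ fixed; reading it that way, your argument is essentially sound, but it takes a genuinely different route from the paper. The paper does not prove this lemma at all: it is quoted from Theorem 3 of \cite{UenoNishizawa97_qMultiGamma}, i.e.\ it rests on the known Stirling-type asymptotic expansion of the multiple gamma function (and for $n=3$ the paper's Lemma \ref{lemma:log-Gamma3-limit-x} is obtained exactly this way, by dividing the explicit Stirling formula for $\log\Gamma_3(x+1)$ by $x^3\log x$). Your ``shortcut'' at the end therefore coincides with the paper's approach, while your main argument --- induction on $n$ via the recurrence \eqref{eqn:G-n-RecReln}, which gives $\log G_{n+1}(m+1)=\sum_{k=1}^{m}\log G_n(k)$, together with the quantitative hypothesis $\log G_n(x+1)=x^n\log x/n!+O(x^n)$ so that the summation error $O(m^{n+1})$ is dominated by $m^{n+1}\log m$ --- is an elementary, self-contained alternative that needs no external expansion and even yields an error term; what it loses is the full asymptotic expansion that the citation provides and which the paper actually uses later (e.g.\ in Lemma \ref{lemma:log-Gamma3-limit-x}). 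The one step you should tighten is the passage from integer to real $x$: ``eventual monotonicity of $\log G_n(x+1)$, visible from the product representation'' is asserted rather than proved, and a naive appeal to the asymptotics would be circular. It can be repaired inside your induction, for instance from Lemma \ref{lemma:G-n-Bohr-Mollerup}: since $\frac{d^{n+1}}{dx^{n+1}}\log G_n(x)\ge 0$ for $x\ge 1$, each derivative of $\log G_n$ is eventually of constant sign, and if $\frac{d}{dx}\log G_n$ were eventually $\le 0$ the function would be eventually nonincreasing, contradicting its divergence along integers (already established by the inductive step); hence $\log G_n(x+1)$ is eventually nondecreasing and the squeeze between $[x]$ and $[x]+1$ goes through.
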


The value of $\log G_n(z)$ at a regular point, in the upper half plane, is provided by the following result.
\begin{proposition}\label{Prop::log-Gamma-n-z-t}
 We have for any $k,n\in\mathbb{N}$,
$$
\lim_{\begin{array}{c} z\rightarrow t  \\\Im z>0\end{array}} \log G_n(z)=\log|G_n(t)|+i(-1)^n\pi \dfrac{(k)_n}{n!}
$$
for $t\in (-k,-k+1)$, where $(k)_n=k(k+1)(k+2)\cdots(k+n-1)$ is the Pochhammer symbol.
\end{proposition}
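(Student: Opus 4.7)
The plan is to lift the recurrence \eqref{eqn:G-n-RecReln} to the logarithm, iterate it, and then induct on $n$. First I would argue that, since $\log G_n(z+1)$ and $\log G_n(z)+\log G_{n-1}(z)$ are both analytic on $\mathbb{C}\setminus(-\infty,0]$ and agree on the positive real axis (where all the $G_m$ are positive), the lifted identity
\begin{align*}
\log G_n(z+1)=\log G_n(z)+\log G_{n-1}(z)
\end{align*}
holds throughout the slit plane with no hidden additive $2\pi i\mathbb{Z}$ term. Iterating gives
\begin{align*}
\log G_n(z)=\log G_n(z+k)-\sum_{j=0}^{k-1}\log G_{n-1}(z+j),
\end{align*}
valid for every $z$ whose translates $z,z+1,\dots,z+k$ all lie in $\mathbb{C}\setminus(-\infty,0]$, in particular for $z=t+i\varepsilon$ with $\varepsilon>0$.

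The base case $n=1$ is handled directly from $\Gamma(z)=\Gamma(z+k)/\prod_{j=0}^{k-1}(z+j)$. As $z\to t+i0^+$ with $t\in(-k,-k+1)$, one has $\log\Gamma(z+k)\to\log\Gamma(t+k)\in\mathbb{R}$ and, since $t+j<0$ for $j=0,\dots,k-1$, $\log(z+j)\to\log|t+j|+i\pi$. Collapsing the real parts via the $\Gamma$-recurrence yields $\log|\Gamma(t)|$ and the $k$ copies of $i\pi$ combine to $-ik\pi=i(-1)^1\pi(k)_1/1!$.

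For the inductive step, fix $n\geq 2$ and assume the proposition for $G_{n-1}$ at every point of $\bigcup_{m\geq 1}(-m,-m+1)$. Taking $z\to t+i0^+$ with $t\in(-k,-k+1)$ in the iterated identity, the term $\log G_n(z+k)$ tends to $\log G_n(t+k)=\log|G_n(t+k)|$ since $t+k\in(0,1)$, while for each $j\in\{0,\dots,k-1\}$ the point $t+j$ lies in $(-(k-j),-(k-j)+1)$ and the inductive hypothesis yields
\begin{align*}
\log G_{n-1}(z+j)\to\log|G_{n-1}(t+j)|+i(-1)^{n-1}\pi\,\frac{(k-j)_{n-1}}{(n-1)!}.
\end{align*}
The real parts collapse to $\log|G_n(t)|$ via the absolute-value form of the iterated recurrence, namely $|G_n(t+k)|=|G_n(t)|\prod_{j=0}^{k-1}|G_{n-1}(t+j)|$. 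Reindexing $m=k-j$, the imaginary contribution is $i(-1)^n\pi\sum_{m=1}^{k}(m)_{n-1}/(n-1)!$, and the proof is completed by the Pochhammer-sum identity
\begin{align*}
\sum_{m=1}^{k}(m)_{n-1}=\frac{(k)_n}{n},
\end{align*}
which follows from the telescoping $(m)_n-(m-1)_n=n\,(m)_{n-1}$ together with $(0)_n=0$.

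The main obstacle I anticipate is the branch bookkeeping at the very first step: verifying that the lifted recurrence is genuinely free of a $2\pi i\mathbb{Z}$ constant, and that the iterated identity survives all the way to the boundary $\Im z\to 0^+$ without any of the intermediate points $z+j$ crossing the cut. Once that is secured, the real-part telescoping and the Pochhammer-sum identity are routine, and the induction closes cleanly.
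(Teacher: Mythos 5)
Your proposal is correct and follows essentially the same route as the paper: induction on $n$ using the iterated log-recurrence $\log G_n(z)=\log G_n(z+k)-\sum_{j=0}^{k-1}\log G_{n-1}(z+j)$, with the real parts telescoping to $\log|G_n(t)|$ and the imaginary parts summed via a Pochhammer identity. The only cosmetic differences are that you prove the $n=1$ base case directly rather than citing Berg--Pedersen, and you obtain $\sum_{m=1}^{k}(m)_{n-1}=(k)_n/n$ by telescoping instead of the paper's hockey-stick binomial (``southeast diagonal sum'') identity.
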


The above result, for the case $n=1$ is given in \cite{BergPedersen01_MonotoneGamma}.
We state this case explicitly as it is used in further discussion.

\begin{lemma}{\rm{ \cite[Lemma 2.1]{BergPedersen01_MonotoneGamma}}}\label{lemma:log-Gamma-z-t}
We have for any $k\geq 1$,
$$
\lim_{\begin{array}{c} z\rightarrow t  \\\Im z>0\end{array}} \log \Gamma(z)=\log|\Gamma(t)|-i\pi k
$$
for $t\in (-k,-k+1)$  and $$ \lim_{\begin{array}{c} z\rightarrow t  \\\Im z>0\end{array}}|\log\Gamma(z)|=\infty$$
for $t=0,-1,-2,\ldots$.
\end{lemma}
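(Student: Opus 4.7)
The plan is to reduce the boundary limit to an interior evaluation plus elementary logarithmic contributions via the functional equation of $\Gamma$. Specifically, I would first establish that on the upper half plane
$$\log \Gamma(z) = \log \Gamma(z+k) - \sum_{j=0}^{k-1} \log(z+j),$$
with principal branches throughout. The single-step identity $\log \Gamma(z) = \log \Gamma(z+1) - \log z$ holds because both sides are holomorphic on the upper half plane, agree on the positive real ray where $\log \Gamma$ is defined as the real logarithm of $\Gamma$, and the upper half plane is connected; iterating $k$ times yields the displayed identity.

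Next, I would let $z \to t$ with $t \in (-k, -k+1)$ from the upper half plane. Then $z+k$ tends to $t+k \in (0,1)$, an interior point at which $\log \Gamma$ is continuous and real-valued, so $\log \Gamma(z+k) \to \log \Gamma(t+k) = \log|\Gamma(t+k)|$. For each $j = 0, 1, \ldots, k-1$, $z + j$ approaches the negative real number $t + j$ from above, so $\log(z+j) \to \log|t+j| + i\pi$ by the definition of the principal branch. Summing the contributions, the limit becomes
$$\log \Gamma(t+k) - \sum_{j=0}^{k-1} \log|t+j| - i\pi k.$$
The real-variable recursion $\Gamma(t+k) = \prod_{j=0}^{k-1}(t+j)\,\Gamma(t)$ then gives, after taking absolute values, $\log \Gamma(t+k) - \sum_{j=0}^{k-1}\log|t+j| = \log|\Gamma(t)|$, which completes the first assertion.

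The second assertion is immediate: at $t \in \{0, -1, -2, \ldots\}$, $\Gamma$ has a simple pole, so $|\Gamma(z)| \to \infty$ and hence $|\log \Gamma(z)| \geq \bigl|\log|\Gamma(z)|\bigr| \to \infty$. The main subtlety, though technically minor, is the branch-consistency step in the iterated logarithmic identity: one must verify that the principal branch of $\log \Gamma$, defined by holomorphic extension from the positive reals, is exactly what the recurrence produces rather than differing by a multiple of $2\pi i$. Anchoring the comparison at a positive real base point and invoking analytic continuation on the simply connected upper half plane resolves this cleanly, after which the rest of the argument is elementary.
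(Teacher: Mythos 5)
Your proof is correct. The paper itself does not prove this lemma (it is quoted from Berg and Pedersen), but your argument via the functional equation $\log\Gamma(z)=\log\Gamma(z+k)-\sum_{j=0}^{k-1}\log(z+j)$, with each $\log(z+j)\to\log|t+j|+i\pi$ from the upper half plane, is exactly the recurrence-shift technique the paper uses to prove the analogous statements for $G_2$ and $G_3$ (Lemmas \ref{lemma:log-Gamma2-z-t} and \ref{lemma:log-Gamma3-z-t}), and your treatment of the pole case via $|\log\Gamma(z)|\geq\log|\Gamma(z)|$ matches the paper's as well; the cited Berg--Pedersen source instead phrases the argument as counting poles of $\Gamma$ in $(t,0]$ for the boundary value of the holomorphic branch, which is the viewpoint recalled in the paper's Note, but the two routes are equivalent and yours is self-contained.
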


The following result would be useful in establishing the Stieltjes representation for $f_n(z)$.

\begin{proposition}\label{Prop:Im-f-n-in-k-k+1}
For any $n,k\in\mathbb{N}$ we have
$$
\lim_{\begin{array}{c} z\rightarrow t  \\\Im z>0\end{array}}\Im f_n(z)
    =\pi\dfrac{\frac{(-1)^n}{n!}(k-1)_n\log|t|-\log|G_n(t+1)|}{t^n((\log|t|)^2+\pi^2)}
$$
for $t\in(-k,-k+1)$.
\end{proposition}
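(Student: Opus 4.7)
The plan is to evaluate $\lim_{z\to t,\,\Im z>0} f_n(z)$ by separately computing the boundary values of the numerator $\log G_n(z+1)$ and the denominator $z^n \Log z$, then extracting the imaginary part of their quotient. Both limits exist as finite complex numbers (the denominator is nonzero since $t \neq 0$ and $\log|t|\neq 0$ only would concern us at $t=-1$, but even there $\log|t|+i\pi \neq 0$), so the limit of $f_n(z)$ is just the quotient of the two limits.

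For the numerator, observe that $t\in(-k,-k+1)$ translates to $t+1\in(-(k-1),-(k-1)+1)$. When $k\ge 2$, I would invoke Proposition~\ref{Prop::log-Gamma-n-z-t} with $k$ replaced by $k-1$ and $t$ replaced by $t+1$, obtaining
\[
\lim_{z\to t,\,\Im z>0}\log G_n(z+1)=\log|G_n(t+1)|+i(-1)^n\pi\,\frac{(k-1)_n}{n!}.
\]
For the denominator, the principal logarithm satisfies $\Log z \to \log|t|+i\pi$ since $t<0$ and the approach is from the upper half-plane, while $z^n\to t^n$ by continuity; hence
\[
\lim_{z\to t,\,\Im z>0} z^n\Log z = t^n\bigl(\log|t|+i\pi\bigr).
\]

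The final step is mechanical: rationalize the denominator by multiplying numerator and denominator of the quotient by $\log|t|-i\pi$, so that the denominator becomes the real quantity $t^n\bigl((\log|t|)^2+\pi^2\bigr)$. Taking the imaginary part of the resulting numerator produces exactly
\[
\pi\Bigl[\tfrac{(-1)^n}{n!}(k-1)_n\log|t|-\log|G_n(t+1)|\Bigr],
\]
which after division yields the claimed formula.

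I do not expect a serious obstacle, but one subtle point deserves attention: the case $k=1$, where $t+1\in(0,1)$ lies outside the range covered by Proposition~\ref{Prop::log-Gamma-n-z-t}. Here $G_n(t+1)$ is real and positive, so $\log G_n(z+1) \to \log|G_n(t+1)|$ without an imaginary contribution, and this is consistent with the formula under the convention $(0)_n=0\cdot 1\cdots(n-1)=0$. Thus the formula of Proposition~\ref{Prop::log-Gamma-n-z-t} extends uniformly to $k=1$, and the computation above covers all $k\in\mathbb{N}$ at once.
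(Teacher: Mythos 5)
Your proposal is correct and follows essentially the same route as the paper: apply Proposition~\ref{Prop::log-Gamma-n-z-t} (with $k$ shifted to $k-1$ and $t$ to $t+1$) to the numerator, take the boundary value $t^n(\log|t|+i\pi)$ of the denominator, and rationalize to extract the imaginary part. Your explicit treatment of the case $k=1$, where $t+1\in(0,1)$ and $G_n(t+1)>0$ so the formula holds with $(0)_n=0$, is a point the paper glosses over, but it is a refinement rather than a different argument.
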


The paper is organized as follows.
Proofs of Proposition $\ref{Prop::log-Gamma-n-z-t}$ and Proposition $\ref{Prop:Im-f-n-in-k-k+1}$  and
further details on $G_n(z)$ are given in Section $\ref{Sec:proof of G-n}$.
In Section $\ref{Sec: triple-Gamma-Pick-results}$ results for the case $n=3$ related to Conjecture $\ref{conj:f-n-Pick}$ are provided.
Concluding remarks with related problems for further research are provided in Section $\ref{sec:Remarks-G-n-Pick}$.

\section{Properties of Multiple Gamma function}\label{Sec:proof of G-n}

A result on obtaining the unique meromorphic function $G_n(z)$, by the analogy of Bohr-Mollerup theorem,
given in \cite[Theorem 1.2, p.100]{Choi14_MultipleGammaSurvey}), is as follows.
\begin{lemma}{\rm \cite{Choi14_MultipleGammaSurvey}}\label{lemma:G-n-Bohr-Mollerup}
For all $n\in {\mathbb{N}}$, there exists a unique meromorphic function $G_n(z)$ satisfying each of the following properties
\begin{enumerate}[1.]
\item $G_n(z+1) = G_{n-1}(z)G_n(z)$, \qquad $z\in {\mathbb{C}}$.
\item $G_n(1)=1$ and $G_0(x)=x$.
\item For $x\geq 1$, $G_n(x)$ are infinitely differentiable and
\begin{align*}
\dfrac{d^{n+1}}{dx^{n+1}}\left(\frac{}{}\log G_n(x)\right) \geq 0.
\end{align*}
\end{enumerate}
\end{lemma}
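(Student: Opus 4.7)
The plan is to prove this Bohr-Mollerup-type characterization via separate existence and uniqueness arguments, each by induction on $n$, starting from the base case $G_0(x) = x$, which visibly satisfies all three axioms.

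For existence, I would construct $G_n$ inductively as a canonical Weierstrass product whose genus is chosen so that the infinite product converges absolutely (equivalently, via Barnes' multiple Hurwitz zeta construction). Given $G_{n-1}$, the product for $G_n$ encodes the zero/pole pattern dictated by the recurrence (1), with polynomial correction factors that simultaneously enforce absolute convergence and the normalization $G_n(1)=1$. Property (1) then follows from a telescoping comparison in the product, while property (3) comes out of termwise differentiation, since each factor contributes a fixed-sign $(n+1)$-th derivative of its logarithm.

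For uniqueness, assume two meromorphic functions $G_n$ and $\widetilde G_n$ both satisfy (1), (2), (3), with $G_{n-1} = \widetilde G_{n-1}$ already pinned down by the induction hypothesis. Set $h(x) := \log G_n(x) - \log \widetilde G_n(x)$ for $x \geq 1$. By (1), $h(x+1) = h(x)$, so $h$ is $1$-periodic and $C^\infty$; by (2), $h(1) = 0$. From (3), both $(\log G_n)^{(n+1)}$ and $(\log \widetilde G_n)^{(n+1)}$ are non-negative, so $h^{(n+1)}$ is sandwiched between $-(\log \widetilde G_n)^{(n+1)}$ and $(\log G_n)^{(n+1)}$. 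If one knows the asymptotic $(\log G_n)^{(n+1)}(x) \to 0$ as $x \to \infty$, the sandwich forces $h^{(n+1)}(x) \to 0$; being $1$-periodic, $h^{(n+1)}$ must then vanish identically. Hence $h$ is a polynomial of degree at most $n$, and $1$-periodicity together with $h(1) = 0$ reduces it to the zero polynomial, yielding $G_n = \widetilde G_n$.

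The main obstacle is establishing the asymptotic $(\log G_n)^{(n+1)}(x) \to 0$, which is a consequence of the axioms but not among them directly. I would obtain it by a secondary induction: the base case follows from $(\log G_0)'(x) = 1/x \to 0$, and the inductive step uses the differentiated functional equation
\[
(\log G_n)^{(n+1)}(x+1) - (\log G_n)^{(n+1)}(x) = (\log G_{n-1})^{(n+1)}(x),
\]
combined with the sign constraint $(\log G_n)^{(n+1)} \geq 0$ and the inductive decay for $G_{n-1}$, to telescope and extract the required rate. This asymptotic step is the delicate part; the rest of the argument is routine bookkeeping once it is in place.
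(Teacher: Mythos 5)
First, a point of comparison: the paper does not prove this lemma at all. It is quoted as a known result (Theorem 1.2 of Choi's survey, i.e.\ Vign\'eras' generalization of the Bohr--Mollerup theorem, going back to Dufresnoy--Pisot), so there is no in-paper argument to measure yours against; your proposal has to stand on its own.

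Your architecture (existence via the Weierstrass/multiple Hurwitz zeta construction, uniqueness via the $1$-periodic difference $h=\log G_n-\log\widetilde G_n$ and its $(n+1)$-st derivative) is the classical route, and most of it is sound: $h(x+1)=h(x)$ for $x\geq1$, $h(1)=0$, and once $h^{(n+1)}\equiv0$ you correctly conclude $h$ is a periodic polynomial, hence zero, and agreement on $[1,\infty)$ propagates to all of $\mathbb{C}$ by the identity theorem. The genuine gap is exactly at the step you flag, and the repair you sketch does not close it. Your sandwich $-(\log\widetilde G_n)^{(n+1)}\leq h^{(n+1)}\leq(\log G_n)^{(n+1)}$ requires the decay $(\log G_n)^{(n+1)}(x)\to0$ for an \emph{arbitrary} function satisfying only axioms (1)--(3), since both competitors are arbitrary; but your telescoping argument, even granting that $v:=(\log G_{n-1})^{(n+1)}$ is nonpositive and tends to zero for the already-identified $G_{n-1}$, only shows that $u:=(\log G_n)^{(n+1)}\geq0$, which satisfies $u(x+1)-u(x)=v(x)$, decreases along each residue class modulo $1$ to some limit; that is, $u$ approaches a nonnegative $1$-periodic limit function $L$, and nothing in the telescoping forces $L\equiv0$. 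Indeed, ``every solution of the axioms has decaying $(n+1)$-st derivative'' is essentially equivalent to the uniqueness being proved, so as stated the argument is circular at this point. The standard fix is to compare the arbitrary solution only against the \emph{constructed} $G_n$, whose decay you can read off the explicit product or Stirling expansion: the sandwich then gives $\limsup_{x\to\infty}h^{(n+1)}(x)\leq0$, hence by periodicity $h^{(n+1)}\leq0$ everywhere; now use that $h^{(n+1)}$ is the derivative of the $1$-periodic function $h^{(n)}$, so $\int_x^{x+1}h^{(n+1)}(s)\,ds=h^{(n)}(x+1)-h^{(n)}(x)=0$, which together with $h^{(n+1)}\leq0$ and continuity forces $h^{(n+1)}\equiv0$. (Equivalently, invoke the Dufresnoy--Pisot uniqueness theorem for the difference equation $\log G_n(x+1)-\log G_n(x)=\log G_{n-1}(x)$.) With that zero-mean observation inserted, and with the existence half fleshed out --- ``property (3) comes out of termwise differentiation'' still needs the actual sign computation for the $(n+1)$-st logarithmic derivative of each product factor, where the polynomial correction terms of degree $n$ drop out --- the proof goes through.
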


By analyzing Lemma $\ref{lemma:G-n-Bohr-Mollerup}$, it is easy to see that $\displaystyle (\Gamma_n(z))^{-1}$ is an entire
function with zeros at $z=-k$, $k\in {\mathbb{N}}\cup\{0\}$ with multiplicities given by
\begin{align}\label{eqn:G-n-multiplicity-zeros}
\left(
\begin{array}{ccc}
n+k-1 \\
n-1
\end{array}
\right)
\qquad n\in{\mathbb{N}}, \, k\in {\mathbb{N}}\cup\{0\}.
\end{align}

Using \eqref{eqn:G-n-multiplicity-zeros}, the following explicit form of the multiple Gamma function of order $n$,
in terms of Weierstrass canonical product of $\Gamma_n$, is given in \cite{Choi14_MultipleGammaSurvey}.
{\small{
\begin{align}\label{eqn:G-n-canonical-product}
\Gamma_n(1+z) = \exp[Q_n(z)] \prod_{k=1}^{\infty} \left(\left(1+\dfrac{z}{k}\right)^{\left(
\begin{array}{ccc}
n+k-2 \\
n-1
\end{array}
\right)
}
\exp\left[
\left(
\begin{array}{ccc}
n+k-2 \\
n-1
\end{array}
\right)
\left(
\sum_{j=1}^{n}
\dfrac{(-1)^{j-1}}{j}\dfrac{z^j}{k^j}
\right)
\right]
\right),
\end{align}
}}
where $Q_n(z)$ is a polynomial of degree $n$ given by
\begin{align*}
Q_n(z):= & (-1)^{n-1}\left[-zA_n(1)+\sum_{k=1}^{n-1}\dfrac{p_k(z)}{k!}\left(\frac{}{}f_{n-1}^{(k)}(1)\right)\right],\\
f_n(z):= & -zA_n(1)+\sum_{k=1}^{n-1}\dfrac{p_k(z)}{k!}\left(\frac{}{}f_{n-1}^{(k)}(1)\right)+A_n(z),\\
A_n(z):= & \sum_{k=1}^{\infty}(-1)^{n-1}
\left(
\begin{array}{ccc}
n+k-2 \\
n-1
\end{array}
\right)
\left[
-\log\left(1+\dfrac{z}{k}\right)+\sum_{j=1}^{n}\dfrac{(-1)^{j-1}}{j}\dfrac{z^j}{k^j}
\right]
\end{align*}
and
\begin{align*}
p_n(z)=\dfrac{1}{n+1}\sum_{k=1}^{n+1}
\left(
\begin{array}{ccc}
n+1 \\
k
\end{array}
\right)
B_{n+1-k}z^k, \qquad n\in{\mathbb{N}}
\end{align*}
with $B_k$ being the Bernoulli numbers.

To prove Proposition $\ref{Prop::log-Gamma-n-z-t}$, we need a behaviour of $\log G_2$ which is given in the
following result.

\begin{lemma}\label{lemma:log-Gamma2-z-t}
 We have for any $k\geq 1$,
$$
\lim_{\begin{array}{c} z\rightarrow t  \\\Im z>0\end{array}} \log G_2(z)=\log|G_2(t)|+i\pi \dfrac{k(k+1)}{2}
$$
for $t\in (-k,-k+1)$  and $$ \lim_{\begin{array}{c} z\rightarrow t  \\\Im z>0\end{array}}|\log G_2(z)|=\infty$$
for $t=0,-1,-2,\ldots$ .

\end{lemma}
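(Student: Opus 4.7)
The plan is to reduce the question about $\log G_2$ to the known statement for $\log \Gamma$ (Lemma \ref{lemma:log-Gamma-z-t}) by iterating the functional equation. From \eqref{eqn:G-n-RecReln} with $n=1$ we have $G_2(z+1)=\Gamma(z)G_2(z)$, and by telescoping
\begin{align*}
\log G_2(z)=\log G_2(z+k)-\sum_{j=0}^{k-1}\log\Gamma(z+j),
\end{align*}
valid in the upper half plane by analytic continuation (which actually defines the branches consistently and sidesteps any $2\pi i$ ambiguity). For $t\in(-k,-k+1)$ the shift $z+k$ tends to $t+k\in(0,1)$, a point where $G_2$ is analytic and positive, so $\log G_2(z+k)\to\log|G_2(t+k)|\in\mathbb{R}$.

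Next I would apply Lemma \ref{lemma:log-Gamma-z-t} to each summand. For $j=0,1,\ldots,k-1$ the argument $z+j$ approaches $t+j\in(-(k-j),-(k-j)+1)$ from the upper half plane, and the lemma gives
\begin{align*}
\log\Gamma(z+j)\longrightarrow\log|\Gamma(t+j)|-i\pi(k-j).
\end{align*}
Summing and using $\sum_{j=0}^{k-1}(k-j)=k(k+1)/2$ yields the imaginary contribution $+i\pi\,k(k+1)/2$ to $\log G_2(z)$. The real part collapses to $\log|G_2(t+k)|-\sum_{j=0}^{k-1}\log|\Gamma(t+j)|$, which by iterating the real recurrence $G_2(t+k)=G_2(t)\prod_{j=0}^{k-1}\Gamma(t+j)$ and taking absolute values equals $\log|G_2(t)|$. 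This is exactly the stated limit. A small sanity check: for $k=1,2,3$ the parity $(-1)^{k(k+1)/2}$ matches the sign of $G_2(t)$ obtained from the recurrence, so the imaginary phase is consistent modulo $2\pi$.

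For the singular case $t\in\{0,-1,-2,\ldots\}$, say $t=-m$, I would push the telescoping one step further to write
\begin{align*}
\log G_2(z)=\log G_2(z+m+1)-\sum_{j=0}^{m}\log\Gamma(z+j).
\end{align*}
Here $z+m+1\to -m+m+1=1$, so $\log G_2(z+m+1)$ stays bounded, and all terms $\log\Gamma(z+j)$ with $j<m$ have limits given by Lemma \ref{lemma:log-Gamma-z-t} and hence remain bounded. The single term $\log\Gamma(z+m)$, however, has argument approaching the pole $0$ of $\Gamma$, and Lemma \ref{lemma:log-Gamma-z-t} asserts $|\log\Gamma(z+m)|\to\infty$. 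Consequently $|\log G_2(z)|\to\infty$. The main bookkeeping obstacle is simply keeping the signs of $\Gamma(t+j)$ and $G_2(t+k)$ straight, but since the conclusion only records $\log|G_2(t)|$ plus a pure imaginary multiple of $\pi$, working with absolute values on the real side and analytic continuation on the complex side handles this cleanly.
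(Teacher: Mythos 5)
For $t\in(-k,-k+1)$ your argument coincides with the paper's own proof: telescope the recurrence $G_2(z+1)=\Gamma(z)G_2(z)$ to get $\log G_2(z)=\log G_2(z+k)-\sum_{j=0}^{k-1}\log\Gamma(z+j)$, apply Lemma \ref{lemma:log-Gamma-z-t} to each summand, sum $\sum_{j=0}^{k-1}(k-j)=k(k+1)/2$, and recombine the real parts via the recurrence. That part is fine (your remark about fixing branches by agreement on the positive axis is a point the paper leaves implicit).

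The treatment of the singular points $t=-m$ contains a genuine error. If $t=-m$ and you write $\log G_2(z)=\log G_2(z+m+1)-\sum_{j=0}^{m}\log\Gamma(z+j)$, then for every $j=0,\dots,m-1$ the argument $z+j$ tends to $j-m$, which is a \emph{negative integer}, i.e.\ a pole of $\Gamma$ --- not a point of an open interval $(-k,-k+1)$. So the first part of Lemma \ref{lemma:log-Gamma-z-t} does not apply to those terms, and your claim that they ``remain bounded'' is false: all $m+1$ terms $\log\Gamma(z+j)$ blow up simultaneously, and a priori one must rule out cancellation. The gap is fixable: since each $\Gamma(z+j)$ has a pole at the relevant integer, $\log|\Gamma(z+j)|\to+\infty$ for every $j=0,\dots,m$, so the divergent contributions to the real part all carry the same sign, whence $\operatorname{Re}\log G_2(z)\to-\infty$ and $|\log G_2(z)|\to\infty$. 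The paper avoids this bookkeeping altogether by noting that $G_2$ (the Barnes $G$-function) vanishes at $0,-1,-2,\dots$, so $\log|G_2(z)|\to-\infty$ and hence $|\log G_2(z)|\geq\bigl|\log|G_2(z)|\bigr|\to\infty$, with no telescoping needed at the singular points.
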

\begin{proof}
Using recurrence relation \eqref{eqn:G-n-RecReln} and Lemma \ref{lemma:log-Gamma-z-t} we have
{\small{
\begin{align*}
  \log G_2(z)&=\log G_2(z+k)-\sum_{l=0}^{k-1} \log G_1(z+l)\\
\Longrightarrow \lim_{\begin{array}{c} z\rightarrow t  \\\Im z>0\end{array}}\log G_2(z)&=\log G_2(t+k)-\sum_{l=0}^{k-1}(\log|G_1(t+l)|-i(k-l)\pi)\\
&=\log\left|\dfrac{G_2(t+k)}{G_1(t)G_1(t+1)\cdots G_1(t+k-1)} \right| +i\pi(k+(k-1)+\cdots+2+1) \\
&=\log|G_2(t)|+i\pi\dfrac{k(k+1)}{2}
\end{align*}
}}
The other part follows from the fact that $|\log G_2(z)|\geq \log|G_2(z)|$.
\end{proof}

\begin{proof}[Proof of Proposition $\ref{Prop::log-Gamma-n-z-t}$]
We have already shown that the statement is true for the case $n=1$ as given in
Lemma \ref{lemma:log-Gamma-z-t}.
The case $n=2$ is established in Lemma $\ref{lemma:log-Gamma2-z-t}$. Lemma $\ref{lemma:log-Gamma3-z-t}$
in Section $\ref{Sec: triple-Gamma-Pick-results}$ reflects the case $n=3$.

Let the statement be true for $n=m\in\mathbb{N}$. Then
\begin{align}\label{eqn:log-G-n-z-t}
\lim_{\begin{array}{c} z\rightarrow t  \\\Im z>0\end{array}} \log G_m(z)=\log|G_m(t)|+i(-1)^m\pi \dfrac{(k)_m}{m!}.
\end{align}
Now with the help of \eqref{eqn:log-G-n-z-t} we have
\begin{align*}
\log G_{m+1}(z)&=\log G_{m+1}(z)+\sum_{l=0}^{k-1}\log G_m(z+l)
\end{align*}
\noindent
$\displaystyle
\Longrightarrow
    \lim_{\begin{array}{c} z\rightarrow t  \\\Im z>0\end{array}} \log G_{m+1}(z)
$
\begin{align*}
&=\log G_{m+1}(t+k)-\sum_{l=0}^{k-1}\left(\log|G_{m}(t+l)|+i(-1)^m\pi\dfrac{(k-l)_m}{m!}  \right)\\
&=\log\left|\dfrac{G_{m+1}(t+k)}{G_m(t)G_m(t+1)\cdots G_m(t+k-1)}\right|+i(-1)^{m+1}\pi\sum_{l=0}^{k-1}\dfrac{(k-l)_m}{m!}
\end{align*}
Again with the help of southeast diagonal sum property (see \cite[eq. $(4.1.6a)$]{Gross08_Book} for details) and a simple computation we obtain
\begin{align*}
\sum_{l=0}^{k-1}\dfrac{(k-l)_m}{m!}
=\sum_{l=1}^k
\left(
\begin{array}{ccc}
l+m-1 \\
m
\end{array}
\right)
=
\left(
\begin{array}{ccc}
m+k \\
m+1
\end{array}
\right)
=\dfrac{(k)_{m+1}}{(m+1)!}.
\end{align*}
Therefore,
\begin{align*}
\lim_{\begin{array}{c} z\rightarrow t  \\\Im z>0\end{array}} \log G_{m+1}(z)=\log|G_{m+1}(t)|+i(-1)^{m+1}\pi\dfrac{(k)_{m+1}}{(m+1)!}
\end{align*}
which shows that the statement is true for $n=m+1$ when it is true for $n=m$.

Hence by the principle of mathematical induction the statement of the proposition is true for all $n\in\mathbb{N}$.
\end{proof}

\begin{note}
From Proposition $\ref{Prop::log-Gamma-n-z-t}$, we recall the remark
given in \cite[p.223]{BergPedersen01_MonotoneGamma} that the holomorphic branch of $\log H$  is real
on the positive half line, whenever the meromorphic function $H$ in ${\mathbb{C}}$ with zeros and
poles on $(-\infty,0]$ is real and positive on the positive half line. Further, the limit of
$\log H$ at a regular point is $i\pi$ multiplied by the number of zeros minus the number of poles in $(t,0]$
counted according to the multiplicity.
\end{note}

\begin{proof}[Proof of Proposition $\ref{Prop:Im-f-n-in-k-k+1}$]
Given that
$\displaystyle f_n(z)=\dfrac{\log G_n(z+1)}{z^n\Log z}$
Hence, for $t\in(-k,-k+1)$,\, $k\geq1$, we have
\begin{align*}
\lim_{\begin{array}{c} z\rightarrow t  \\\Im z>0\end{array}}f_n(z)
&=\dfrac{\log|G_n(t+1)|+i\pi\dfrac{(-1)^n}{n!}(k-1)_n}{t^n(\log|t|+i \pi)},\\
&=\pi\dfrac{\frac{(-1)^n}{n!}(k-1)_n\log|t|-\log|G_n(t+1)|}{t^n((\log|t|)^2+\pi^2)},
\end{align*}
which completes the proof.
\end{proof}

Note that for establishing Conjecture $\ref{conj:f-n-Pick}$ for $f_n(z)$, we need a result similar to Lemma $\ref{lemma:d3-t-value}$
and a result for growth at infinity. We are unable to establish the same at this stage. Hence in the next section, we provide
the results for the case $n=3$.

\section{The triple Gamma function related to Pick functions}\label{Sec: triple-Gamma-Pick-results}
In this section we provide the Stieltjes representation for the function
\begin{align*}
f_3(z)=\dfrac{\log \Gamma_3(z+1)}{z^3\Log z},\quad z\in \mathbb{C}\setminus (-\infty,0]
\end{align*}
so that $f_3(z)$ belongs to the class of Pick functions.

Note that, the triple Gamma function $\Gamma_3(z)$ can be defined by the Weierstrass canonical product form 
which will be a particular case of \eqref{eqn:G-n-canonical-product} for $n=3$.
Taking logarithm on both sides of that particular case, for $z\in\mathbb{C}\setminus(-\infty,0]$, we have
\begin{align*}
\log\Gamma_3(1+z)&=Dz^3+Ez^2+Fz+\log P_{\Gamma_3}(z)
\end{align*}
where
\begin{align}\label{eqn:P-Gamma3-Representation}
\log P_{\Gamma_3}(z)&=\sum_{k=1}^{\infty}\left(-\dfrac{k(k+1)}{2}\Log\left(1+\dfrac{z}{k}\right)
                +\dfrac{k+1}{2}z-\dfrac{1}{4}\left(1+\dfrac{1}{k}\right)z^2
                +\dfrac{1}{6k}\left(1+\dfrac{1}{k}  \right)z^3  \right),
\end{align}
with
{\small{
\begin{align} \label{eqn:P-Gamma3-representation-DEF-value}
D=-\dfrac{1}{6}\left(\gamma+\dfrac{\pi^2}{6}+\dfrac{3}{2}\right),
\qquad \qquad E=\dfrac{1}{4}\left(\gamma+\log(2\pi)+\dfrac{1}{2}\right) \qquad {\mbox{and}} \quad
F=\dfrac{3}{8}-\dfrac{\log(2\pi)}{4}-\log A
\end{align}
}}

With the help of maximum principle we will prove that $f_3$ has nonnegative imaginary part in the upper half plane.
For this we require to analyze the behaviour on the real line as well as the growth at infinity.
To prove our required results we need the following lemmas.

\begin{lemma}\label{lemma:P-Gamma3-bound}
There exist a constant and a sequence $\{r_n\}$ tending to infinity such that $$|\log P_{\Gamma_3}(z)|\leq \mbox{ Const } |z|^3\log|z|$$
holds for all $z\in\mathbb{C}\setminus(-\infty,0]$ of absolute value $r_n$.
\end{lemma}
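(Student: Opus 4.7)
The plan is to split the series in \eqref{eqn:P-Gamma3-Representation} defining $\log P_{\Gamma_3}(z)$ at a cutoff $k_0\sim 2|z|$, and to bound the head ($k\le k_0$) and tail ($k>k_0$) separately on a carefully chosen sequence of circles $|z|=r_n$ with $r_n\to\infty$. The natural choice is $r_n=n+\tfrac12$, $n\in\mathbb{N}$, because then the reverse triangle inequality gives
\begin{align*}
|z+k|\ge\bigl||z|-k\bigr|=\bigl|n+\tfrac12-k\bigr|\ge\tfrac12 \qquad\text{for every } k\in\mathbb{N},\ |z|=r_n,
\end{align*}
so $|1+z/k|=|z+k|/k\ge 1/(2k)$ stays uniformly bounded away from $0$ on these circles. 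This lower bound is what will allow $\log(1+z/k)$ to be controlled in the dangerous middle range where $1+z/k$ might otherwise be very small.

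For the tail ($k>2|z|$), I would Taylor-expand the principal branch $\log(1+z/k)$, which is valid since $|z/k|<1/2$. A direct computation
\begin{align*}
-\frac{k(k+1)}{2}\log\!\left(1+\frac{z}{k}\right)=-\frac{k+1}{2}z+\frac{k+1}{4k}z^2-\frac{k+1}{6k^2}z^3+\sum_{j\ge 4}\frac{(-1)^{j}(k+1)}{2j\,k^{j-1}}z^{j}
\end{align*}
shows that the first three explicit terms exactly cancel the polynomial correction $\tfrac{k+1}{2}z-\tfrac14(1+\tfrac1k)z^2+\tfrac{1}{6k}(1+\tfrac1k)z^3$ appearing in the $k$-th summand of \eqref{eqn:P-Gamma3-Representation}. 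What remains is dominated by $(k+1)|z|^4/(4k^3)$ via a geometric-series estimate, and summing over $k>2|z|$ contributes only $O(|z|^3)$.

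For the head ($1\le k\le 2|z|$), I would bound each summand by the triangle inequality. Combining $|\log(1+z/k)|\le\bigl|\log|1+z/k|\bigr|+\pi$ with the two-sided estimate $1/(2k)\le|1+z/k|\le 1+|z|/k$ produces $|\log(1+z/k)|=O(\log|z|)$ uniformly for $1\le k\le 2|z|$ and $|z|=r_n$. Then $\tfrac{k(k+1)}{2}|\log(1+z/k)|=O(k^2\log|z|)$ sums to $O(|z|^3\log|z|)$, and the polynomial pieces are handled similarly, with the largest contribution $\tfrac{k+1}{6k^2}|z|^3$ producing a harmonic partial sum of order $\log|z|$ and hence total $O(|z|^3\log|z|)$. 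Combining head and tail yields $|\log P_{\Gamma_3}(z)|\le\text{Const}\,|z|^3\log|z|$ on each circle $|z|=r_n$. The main obstacle is precisely this middle range $k\approx|z|$, where $1+z/k$ can come arbitrarily close to $0$; the half-integer radii $r_n=n+\tfrac12$ are chosen to sidestep these near-singularities uniformly and make the $O(\log|z|)$ bound on $|\log(1+z/k)|$ valid throughout the head.
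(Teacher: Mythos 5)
Your proof is correct, but it follows a genuinely different route from the paper. The paper does not estimate the series directly: it rewrites $1/P_{\Gamma_3}(z)$ as a Weierstrass canonical product of genus $3$ with zeros at $-k$ of multiplicity $k(k+1)/2$, records the zero-counting function $N_3(t)=[t]([t]+1)([t]+2)/6\sim t^3/6$, and then invokes the general growth estimate for logarithms of canonical products with negative zeros (Appendix A, Proposition A.2 of Pedersen's double-gamma paper) to get $|\log P_{\Gamma_3}(z)|\leq \mathrm{Const}\,|z|^3\log|z|$ on a suitable sequence of circles. You instead give a self-contained estimate: splitting the defining series at $k\approx 2|z|$, observing the exact cancellation of the polynomial correction terms $\tfrac{k+1}{2}z-\tfrac{k+1}{4k}z^2+\tfrac{k+1}{6k^2}z^3$ against the first three Taylor terms of $-\tfrac{k(k+1)}{2}\Log(1+z/k)$ so the tail contributes only $O(|z|^3)$, and controlling the head by the uniform bound $|\Log(1+z/k)|\leq\bigl|\log|1+z/k|\bigr|+\pi=O(\log|z|)$, which is legitimate precisely because the half-integer radii $r_n=n+\tfrac12$ keep $|z+k|\geq\tfrac12$, i.e.\ keep $z$ away from the zeros $-k$; the sums then give $O(|z|^3\log|z|)$. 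What each approach buys: the paper's argument is short and reuses machinery already set up for the double Gamma case (and the counting function $N_3$ it identifies reappears later in the density $\tilde d_3$), but it rests on the cited proposition extending from the genus-$2$ situation treated there to the genus-$3$ product arising here; your computation is longer but elementary, makes the admissible sequence $r_n$ completely explicit, and verifies the bound without appeal to that external result.
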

\begin{proof}

Expression \eqref{eqn:P-Gamma3-Representation} can also be represented in the following canonical product form
\begin{align}\nonumber
P_{\Gamma_3}(z)
&=\prod_{k=1}^{\infty}\left(1+\dfrac{z}{k}\right)^{-\dfrac{k(k+1)}{2}}
    \exp{\left(\dfrac{k(k+1)}{2}\dfrac{z}{k}-\dfrac{k(k+1)}{2}\dfrac{z^2}{2k^2}+\dfrac{k(k+1)}{2}\dfrac{z^3}{3k^3}\right)}\\
\Rightarrow\dfrac{1}{P_{\Gamma_3}(z)}
&=\prod_{k=1}^{\infty}\left(1+\dfrac{z}{k}\right)^{\dfrac{k(k+1)}{2}}\exp\left\{{\dfrac{k(k+1)}{2} }
    \left( -\dfrac{z}{k}+\dfrac{z^2}{2k}-\dfrac{z^3}{3k} \right)\right\}
\end{align}
which implies $\dfrac{1}{P_{\Gamma_3}(z)}$ is of genus $3$.

Since $\dfrac{1}{P_{\Gamma_3}(z)}$ has zeros at $-k$ of multiplicity $\dfrac{k(k+1)}{2}$ for $k\geq1$
we obtain the zero counting function $n\equiv N_3(t)$ associated with the zeros of $\dfrac{1}{P_{\Gamma_3}(z)}$ as
 $$
N_3(t)=\dfrac{[t]([t]+1)([t]+2)}{6},\quad t>0,
$$
where $[t]$ denotes the integer part of $t$.

Hence using Appendix A and Proposition A.2 in \cite[p. 368]{Pedersen03_JCAM_doubleGammaPick} we have
\begin{align*}
\left|\log \dfrac{1}{P_{\Gamma_3}(z)}\right|&\leq \mbox{Constant} |z|^3 \log|z|\\
\Longrightarrow |\log P_{\Gamma_3}(z)|&\leq \mbox{Constant} |z|^3 \log|z|
\qedhere
\end{align*}
\end{proof}

\begin{lemma}\label{lemma:log-Gamma3-limit-x}
\begin{align*}
\lim_{x\rightarrow \infty}\dfrac{\log\Gamma_3(x+1)}{x^3\log x}=\dfrac{1}{6}.
\end{align*}
\end{lemma}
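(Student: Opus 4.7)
The plan is to use the decomposition
\begin{align*}
\log \Gamma_3(1+x) = Dx^3 + Ex^2 + Fx + \log P_{\Gamma_3}(x)
\end{align*}
established at the start of Section \ref{Sec: triple-Gamma-Pick-results}. Since $D$, $E$, $F$ are finite constants, the polynomial piece divided by $x^3\log x$ tends to $0$ as $x \to \infty$, so the task reduces to showing
\begin{align*}
\lim_{x\to\infty}\frac{\log P_{\Gamma_3}(x)}{x^3\log x} = \frac{1}{6}.
\end{align*}

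For this, I would convert the series in \eqref{eqn:P-Gamma3-Representation} into a single integral. The elementary identity
\begin{align*}
-\log(1+u)+u-\frac{u^2}{2}+\frac{u^3}{3}=\int_0^u \frac{t^3}{1+t}\,dt,\qquad u>0,
\end{align*}
applied to each summand with $u=x/k$, followed by the substitution $t=s/k$ and Fubini/Tonelli (valid because every resulting integrand is nonnegative for $x>0$), rewrites the series as
\begin{align*}
\log P_{\Gamma_3}(x) = \frac{1}{2}\int_0^x s^3 \sum_{k=1}^\infty \frac{k+1}{k^2(k+s)}\,ds.
\end{align*}

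The inner series can be put in closed form through partial fractions and the classical identity $\sum_{k=1}^\infty \bigl(\tfrac{1}{k}-\tfrac{1}{k+s}\bigr) = \psi(1+s)+\gamma$, giving
\begin{align*}
\sum_{k=1}^\infty \frac{k+1}{k^2(k+s)} = \frac{\psi(1+s)+\gamma}{s}\left(1-\frac{1}{s}\right) + \frac{\pi^2}{6s}.
\end{align*}
Together with $\psi(1+s)=\log s + O(1/s)$ as $s\to\infty$, this yields an inner factor of $\frac{\log s}{s}+O(1/s)$. Substituting back and using the elementary antiderivative $\int s^2\log s\,ds = \frac{s^3\log s}{3} - \frac{s^3}{9}$ produces $\log P_{\Gamma_3}(x) = \frac{x^3\log x}{6} + O(x^3)$, from which the stated limit follows.

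The main obstacle is twofold: first, justifying the interchange of the infinite sum and the integral (which is clean here because all integrands are nonnegative for $x\geq 0$); and second, controlling the contribution to $\int_0^x s^3 (\cdots)\,ds$ coming from the small-$s$ region, where the asymptotic $\log s/s$ is inaccurate and $\psi(1+s)+\gamma$ actually vanishes at $s=0$. Since the closed form for the inner series is explicit, all errors can be tracked and shown to be $O(x^3)=o(x^3\log x)$, so beyond this routine bookkeeping no further delicate estimates are required.
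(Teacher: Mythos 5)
Your argument is correct, but it follows a genuinely different route from the paper. The paper's proof is a one-liner by citation: it quotes the Stirling-type asymptotic expansion of $\log\Gamma_3(x+1)$ from Ueno and Nishizawa, divides by $x^3\log x$, and reads off the limit $1/6$ from the leading term $\tfrac{x^3}{6}\log(x+1)$. You instead derive the asymptotics from scratch out of the canonical-product representation \eqref{eqn:P-Gamma3-Representation}: after discarding the cubic polynomial $Dx^3+Ex^2+Fx$ (which is $o(x^3\log x)$), the identity $-\log(1+u)+u-\tfrac{u^2}{2}+\tfrac{u^3}{3}=\int_0^u\frac{t^3}{1+t}\,dt$ plus Tonelli (legitimate, since every term is nonnegative for $x>0$) gives $\log P_{\Gamma_3}(x)=\tfrac12\int_0^x s^3\sum_{k\ge1}\frac{k+1}{k^2(k+s)}\,ds$, your partial-fraction closed form for the inner sum checks out, and $\psi(1+s)=\log s+O(1/s)$ then yields $\log P_{\Gamma_3}(x)=\tfrac{x^3\log x}{6}+O(x^3)$ once the bounded contribution from $s\in[0,1]$ (where the series is continuous, with value $\zeta(2)+\zeta(3)$ at $s=0$) is split off — exactly the bookkeeping you flag. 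What your approach buys is self-containedness and verifiability: it needs only the product form already displayed in Section \ref{Sec: triple-Gamma-Pick-results} and standard digamma asymptotics, and in fact gives the stronger statement $\log\Gamma_3(x+1)=\tfrac{x^3\log x}{6}+O(x^3)$; by contrast the paper's proof is shorter but rests entirely on the quoted expansion (whose remainder term, as reproduced in the paper, is garbled), so your derivation is arguably the more robust of the two.
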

\begin{proof}
The Stirling formula for triple Gamma function \cite[p. 437]{UenoNishizawa97_qMultiGamma} is given by
\begin{align*}
\log\Gamma_3(x+1)
&\sim \left( \dfrac{x^3}{6}-\dfrac{x^2}{4}+\dfrac{1}{24} \right)\log(x+1)-\dfrac{11}{36}x^3+\dfrac{5}{24}x^2+\dfrac{x}{3}-\dfrac{13}{72}\\
&-\dfrac{x^2-x}{2}\zeta'(0)+\dfrac{2x-1}{2}\zeta'(-1)-\dfrac{1}{2}\zeta'(-2)\\
&+\dfrac{1}{12}\dfrac{1}{x+1}+\sum_{r=2}^{\infty}\{x^2-(6r-11)x+(4r^2-16r+16)\}
\end{align*}
Now dividing both sides by $x^3\log x$ and taking $x\rightarrow\infty$ we have the required result.
\end{proof}

\begin{lemma}\label{lemma:log-Gamma3-z-t}
 We have for any $k\geq 1$,
$$
\lim_{\begin{array}{c} z\rightarrow t  \\\Im z>0\end{array}} \log G_3(z)=\log|G_3(t)|-i\pi \dfrac{k(k+1)(k+2)}{6}
$$
for $t\in (-k,-k+1)$  and $$ \lim_{\begin{array}{c} z\rightarrow t  \\\Im z>0\end{array}}|\log G_3(z)|=\infty$$
for $t=0,-1,-2,\ldots$ .

\end{lemma}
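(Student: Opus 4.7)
The plan is to mirror the proof of Lemma \ref{lemma:log-Gamma2-z-t}, with the roles of $G_1, G_2$ replaced by $G_2, G_3$. Iterating the recurrence $G_3(z+1) = G_3(z)\, G_2(z)$ gives
\begin{align*}
\log G_3(z) = \log G_3(z+k) - \sum_{l=0}^{k-1} \log G_2(z+l).
\end{align*}
For $t \in (-k, -k+1)$, each shifted argument $t+l$ lies in $(-(k-l), -(k-l)+1)$, so Lemma \ref{lemma:log-Gamma2-z-t} applies to every term on the right and supplies its boundary value as $z \to t$ with $\Im z > 0$.

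Taking the limit, the real parts recombine through the recurrence into
\begin{align*}
\log\left|\frac{G_3(t+k)}{G_2(t)\, G_2(t+1)\cdots G_2(t+k-1)}\right| = \log|G_3(t)|,
\end{align*}
while the imaginary contributions total
\begin{align*}
-i\pi \sum_{l=0}^{k-1} \frac{(k-l)(k-l+1)}{2} = -i\pi \sum_{j=1}^{k}\binom{j+1}{2} = -i\pi \binom{k+2}{3} = -i\pi\, \frac{k(k+1)(k+2)}{6},
\end{align*}
by the hockey-stick identity. This yields the first assertion.

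For the divergence at $t = 0, -1, -2, \ldots$, we recall from Lemma \ref{lemma:G-n-Bohr-Mollerup} together with \eqref{eqn:G-n-multiplicity-zeros} that $1/\Gamma_3$ is entire with a zero of multiplicity $\binom{k+2}{2}$ at $z = -k$, so $G_3 = \Gamma_3$ has poles at all non-positive integers. Hence $|G_3(z)| \to \infty$ as $z \to -k$, and the inequality $|\log G_3(z)| \geq \bigl|\log|G_3(z)|\bigr|$ forces the claimed divergence.

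I do not anticipate any serious obstacle; the argument is structurally identical to Lemma \ref{lemma:log-Gamma2-z-t}. The only point requiring attention is the overall sign, which is flipped compared to the $n=2$ case: the minus in front of $i\pi k(k+1)(k+2)/6$ arises because the subtraction in the iterated recurrence combines with the \emph{positive} imaginary contributions supplied by $\log G_2$, producing a net negative sign. This is precisely the $(-1)^n$ parity predicted by Proposition \ref{Prop::log-Gamma-n-z-t} at $n = 3$.
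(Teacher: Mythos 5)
Your proposal is correct and follows essentially the same route as the paper: iterate the recurrence $G_3(z+1)=G_3(z)G_2(z)$, apply Lemma \ref{lemma:log-Gamma2-z-t} termwise on $(-(k-l),-(k-l)+1)$, sum the imaginary contributions to $\frac{k(k+1)(k+2)}{6}$, and deduce the divergence at the non-positive integers from the poles of $G_3=\Gamma_3$ together with $|\log G_3(z)|\geq \log|G_3(z)|$. Your treatment of the divergence part is in fact slightly more explicit than the paper's one-line remark, but there is no substantive difference in method.
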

\begin{proof}
Using recurrence relation \eqref{eqn:G-n-RecReln} and Lemma \ref{lemma:log-Gamma2-z-t} we have
\begin{align*}
  \log G_3(z)&=\log G_3(z+k)-\sum_{l=0}^{k-1} \log G_2(z+l)\\
\Longrightarrow
    \lim_{\begin{array}{c} z\rightarrow t  \\\Im z>0\end{array}}\log G_3(z)
&=\log G_3(t+k)-\sum_{l=0}^{k-1}\left(\log|G_2(t+l)|+i\pi \dfrac{(k-l)(k-l+1)}{2} \right)\\
&=\log\left|\dfrac{G_3(t+k)}{G_2(t)G_2(t+1)\cdots G_2(t+k-1)} \right| -\dfrac{i\pi}{2}\sum_{n=1}^k n(n+1) \\
&=\log|G_3(t)|-i\pi\dfrac{k(k+1)(k+2)}{6}
\end{align*}
The other part follows from the fact that $|\log G_3(z)|\geq \log|G_3(z)|$.
\end{proof}

\begin{lemma}\label{lemma:d3-t-value}
  Let $k\geq1$ and $\tilde{d}:\mathbb{R}\rightarrow\mathbb{R}\cup\{\infty \}$ be such that
\begin{align*}
  \tilde{d}_3(t)=\left\{
                 \begin{array}{lll}
                   \dfrac{(-1)^3\dfrac{k(k-1)(k+1)}{3!}\log|t|-\log|G_3(t+1)|}{t^3((\log|t|)^2+\pi^2)} , & \hbox{for $t\in(-k,-k+1)$;} \\
                   0, & \hbox{for $t\geq 0$;}\\
                   \infty, & \hbox{ for $t=-k$. }
                 \end{array}
               \right.
\end{align*}
  Then $\tilde{d}_3(t)\geq 0$.
\end{lemma}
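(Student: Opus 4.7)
The plan is to reduce the assertion $\tilde d_3(t)\ge 0$ to a pointwise lower bound on $\log|G_3(t+1)|$, and then to verify that bound by iterating the functional equation for $G_3$ until the argument lies on the positive real axis.

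First, for $t\in(-k,-k+1)$ one has $t<0$, so $t^3<0$ and the denominator $t^3((\log|t|)^2+\pi^2)$ is strictly negative. Hence $\tilde d_3(t)\ge 0$ is equivalent to the numerator being nonpositive, i.e., to
\begin{equation*}
\log|G_3(t+1)|\;\ge\;-\,\frac{(k-1)k(k+1)}{6}\,\log|t|,\qquad t\in(-k,-k+1).
\end{equation*}
This reformulates the lemma as a scalar lower bound on the size of $|G_3(t+1)|$.

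To verify this inequality I would iterate the recurrence $G_3(z+1)=G_3(z)G_2(z)$ a total of $k$ times, take absolute values, and obtain the telescoping identity
\begin{equation*}
\log|G_3(t+1)|\;=\;\log G_3(t+k+1)\;-\;\sum_{j=0}^{k-1}\log|G_2(t+1+j)|,
\end{equation*}
in which $t+k+1\in(1,2)$ places $G_3(t+k+1)$ in the region of positivity. The first term can be bounded below using $G_3(1)=G_3(2)=1$ together with the Weierstrass product \eqref{eqn:G-n-canonical-product} at $n=3$. The $k$ remaining terms $\log|G_2(t+1+j)|$ run over the intervals $(-k+j,-k+j+1)$ for $j=0,\dots,k-1$, with the final term $j=k-1$ landing in $(0,1)$ where $G_2=G$ is positive and bounded by $1$, and can be estimated using the analogous density bound for the double Gamma function proved by Pedersen in \cite{Pedersen03_JCAM_doubleGammaPick}. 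The base case $k=1$ reduces to $G_3(x)\ge 1$ on $(0,1)$, which I would read off directly from the canonical product expansion \eqref{eqn:P-Gamma3-Representation} by checking the sign of each summand on $(-1,0)$ together with the polynomial part $Dz^3+Ez^2+Fz$ evaluated via \eqref{eqn:P-Gamma3-representation-DEF-value}.

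The main obstacle, I expect, is quantitatively matching the cubic coefficient $(k-1)k(k+1)/6$ on the right-hand side with the cumulative effect of the poles of $G_3$ (of multiplicities $\binom{j+2}{2}$) and the poles of $G_2$ (of multiplicities $j+1$) at the shifted points $t+1+j$. The underlying combinatorial cancellation is the southeast-diagonal identity of the same kind used in the proof of Proposition \ref{Prop::log-Gamma-n-z-t}, but here we need a pointwise inequality throughout the open interval $(-k,-k+1)$ rather than a statement only at its endpoints. Turning the combinatorial cancellation into such a pointwise estimate will require supplementing the asymptotic bound of Lemma \ref{lemma:P-Gamma3-bound} with term-by-term monotonicity information read off from the canonical product \eqref{eqn:P-Gamma3-Representation}, together with careful sign tracking of the factors $\log|1+t/l|$ for $l<k$ and $l\ge k$ separately.
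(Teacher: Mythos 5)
Your reduction of $\tilde d_3(t)\ge 0$ to the inequality $\log|G_3(t+1)|\ge -\frac{(k-1)k(k+1)}{6}\log|t|$ on $(-k,-k+1)$ is correct, and your base case $k=1$ is sound (indeed cleaner than the paper's: for $z\in(-1,0)$ each of $Dz^3$, $Ez^2$, $Fz$ is positive because $D<0$, $E>0$, $F<0$ by \eqref{eqn:P-Gamma3-representation-DEF-value}, and every summand in \eqref{eqn:P-Gamma3-Representation} is positive since $\Log(1+x)<x-\frac{x^2}{2}+\frac{x^3}{3}$ on $(-1,0)$). The genuine gap is in the telescoping step. You iterate \eqref{eqn:G-n-RecReln} $k$ times, so your boundary term is $\log G_3(t+k+1)$ with $t+k+1\in(1,2)$, and you propose to bound it below by $0$ via $G_3(1)=G_3(2)=1$. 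That bound is false: $(\log G_3)'(1)=F<0$ (the paper uses exactly this fact in the proof of Theorem \ref{Thm:lim-inf-f-3}), so $G_3<1$ on a substantial part of $(1,2)$. This is fatal already at $k=2$, where your scheme has no slack to absorb a negative boundary term: Pedersen's bound gives $\log|G_2(t+1)|\le\log|t|$ with coefficient exactly the required $\frac{1\cdot 2\cdot 3}{6}=1$, and you discard $\log|G_2(t+2)|\le 0$, so the chain yields only $\log|G_3(t+1)|+\log|t|\ \ge\ \log G_3(t+3)$, whose right-hand side is negative for most $t\in(-2,-1)$ --- inconclusive, even though the desired inequality holds there with room to spare.

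The repair is small but essential: stop the telescoping one step earlier, at $t+k\in(0,1)$. Writing $G_3(t+1)=G_3(t+k)\prod_{m=1}^{k-1}G_2(t+m)^{-1}$, the boundary term satisfies $\log G_3(t+k)\ge 0$ by your base case, and Pedersen's density bound for the double Gamma function, namely $\log|G_2(s+1)|\le\frac{(j-1)j}{2}\log|s|$ for $s\in(-j,-j+1)$, applied with $s=t+m-1$ gives
\begin{align*}
\sum_{m=1}^{k-1}\log|G_2(t+m)|\ \le\ \sum_{m=1}^{k-1}\frac{(k-m)(k-m+1)}{2}\,\log|t+m-1|\ \le\ \frac{(k-1)k(k+1)}{6}\,\log|t|,
\end{align*}
since $0\le\log|t+m-1|\le\log|t|$ for these $m$ and the coefficients sum, by the southeast-diagonal identity you invoke, to exactly $\frac{(k-1)k(k+1)}{6}$. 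With that correction your route closes and is genuinely different from the paper's, which proves $k=1$ by a monotonicity computation, $k=2$ by numerically locating the minimum of an explicit cubic minus a logarithm, and then treats general $k$ by comparing coefficients at a fixed $t$; your version leans instead on the functional equation plus the known $n=2$ result. As submitted, however, the proposal does not prove the lemma: the boundary estimate it needs is false, and the remaining estimates are announced rather than carried out.
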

\begin{proof}
It is enough to show that,  for $t\in(-k,-k+1)$, $\log|G_3(t+1)|+\dfrac{k(k-1)(k+1)}{6}\log|t|\geq0$, which
we obtain by induction hypothesis.

First we show that the statement is true for $k=1$. i.e;
$$
g(t)=\log|G_3(t+1)|\geq 0 \quad \mbox{ for } t\in(-1,0).
$$
Clearly, $g(t)$ is decreasing for $-1<t<0$, as
\newline
$g'(t)
$
{\small{
\begin{align*}
=-\dfrac{t^2}{2}\left( \gamma+\dfrac{\pi^2}{6}+\dfrac{3}{2}\right)+\dfrac{t}{2}\left(\gamma+\log2\pi+\dfrac{1}{2}\right)
+\left(\dfrac{3}{8}-\dfrac{\log2\pi}{4}-\log A \right)+\dfrac{t^3}{2}\sum_{k=0}^{\infty}\dfrac{k+2}{(k+1)^2(k+t+1)}
\end{align*}
}}
is negative for $-1<t<0$. Hence $ g(t)\geq g(0)=0$, as $G_3(1)=1$.

Now we will show that the statement is true for $k=2$. It is easy to see that
\begin{align*}
\log|G_3(t+1)|&=\left(D+\dfrac{1}{3}\right)t^3+\left(E-\dfrac{1}{2}\right)t^2+(F+1)t-\Log|1+t|\\
& \qquad \quad  +\sum_{k=2}^{\infty}\left(-\dfrac{k(k+1)}{2}\left(\Log\left|1+\dfrac{t}{k}\right|
    -\dfrac{t}{k}+\dfrac{1}{2}\dfrac{t^2}{k^2}-\dfrac{1}{3}\dfrac{t^3}{k^3}  \right)\right),
\end{align*}
where $D$, $E$ and $F$ are given, as in
\eqref{eqn:P-Gamma3-representation-DEF-value}.
Clearly
\begin{align*}
\sum_{k=2}^{\infty}\left(-\dfrac{k(k+1)}{2}\left(\Log\left|1+\dfrac{t}{k}\right|
-\dfrac{t}{k}+\dfrac{1}{2}\dfrac{t^2}{k^2}-\dfrac{1}{3}\dfrac{t^3}{k^3}  \right)\right)
\end{align*}
is positive, because
$$
\Log\left|1+\dfrac{t}{k}\right|<\dfrac{t}{k}-\dfrac{t^2}{2k^2}+\dfrac{t^3}{3k^3}\quad \mbox{ for }|t|<k.
$$
Now, considering the remaining part, it is enough to prove
$$
g(t):=\left(D+\dfrac{1}{3}\right)t^3+\left(E-\dfrac{1}{2}\right)t^2+(F+1)t-\Log|1+t|>0
$$
as $\log|t|>0$ for $t\in(-2,-1)$.

Since in $-2<t<-1$, $g'(t)=0$ is equivalent to
$$
\alpha_3t^3+\alpha_2t^2+\alpha_1t+\alpha_0=0
$$
where, $\alpha_0=F$, $\alpha_1=2E+F$, $\alpha_2=3D+2E$ and $\alpha_3=3D+1$.

Approximating  $\alpha_3,\alpha_2,\alpha_1,\alpha_0$ to numerical values and using Descartes' rule of signs,
it can be observed that $g'(t)$ has two positive real roots and a negative real root.

Clearly $g'(-2)g'(-1.2)<0$ implies the negative real root lies in the required interval $(-2,-1)$, which we denote as $t_0\simeq -1.50615\ldots$.

Now, $g''(t_0)>0$ justifies that $t_0$ is the minimum value.

By means of simple computation, we conclude that $g(t_0)>0$ which implies
$$
\log|G_3(t+1)|>0 \quad \mbox{ for } k=2.
$$

Again $\log|t|>0$ for $t\in(-2,-1)$. which proves that the statement is true for $k=2$.
Let the statement be true for $k=m\geq2$. Then
\begin{align*}
\log|G_3(t+1)|&\geq -\dfrac{m(m-1)(m+1)}{6}\log|t| \\
&\geq -\dfrac{m(m+1)(m+2)}{6}\log|t| \quad (\mbox{ as } m-1<m+2)
\end{align*}
which shows that the statement is true for $k=m+1$.

Hence by induction hypothesis we have
\begin{align}\label{eqn:logG3-t-k-k+1}
 \log|G_3(t+1)|+\dfrac{k(k-1)(k+1)}{6}\log|t|\geq0 \quad \mbox{ for } t\in(-k,-k+1).
\end{align}

Therefore, $\tilde{d}_3(t)\geq 0$ for $t<0$.
\end{proof}

\begin{remark}
$\tilde{d}_3(t)$ can be expressed as
\begin{align}\label{eqn:d3(t)-expression}
  \tilde{d}_3(t)=\left\{
                 \begin{array}{lll}
                  - \dfrac{\log|G_3(t+1)|+n(|t|)\log|t|}{t^3((\log|t|)^2+\pi^2)} , & \hbox{for $t<0$;} \\
                   0, & \hbox{for $t\geq 0$;}\\
                   \end{array}
               \right.
\end{align}
where
\begin{align}\label{eqn:n(t)-value}
n(t)=\dfrac{[t]([t]+1)([t]+2)}{6},\quad \mbox{ for } t>0.
\end{align}
\end{remark}

\begin{theorem}\label{Thm:lim-inf-f-3}
We have
$$\lim \inf \Im f_3(z)\geq 0 \quad \mbox{ and }\quad \Im f_3(z)\rightarrow \pi\tilde{d}_3(t)$$
as $z\rightarrow t\in\mathbb{R}$ within the upper half plane.
\end{theorem}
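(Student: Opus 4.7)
The plan is to split the analysis of $\Im f_3(z)$ as $z\to t\in\mathbb{R}$ from the upper half plane into regimes matching the piecewise definition of $\tilde{d}_3$: $t>0$, $t\in(-k,-k+1)$ for $k\geq 1$, $t=-k$ for $k\geq 1$, and the distinguished boundary point $t=0$. The $\liminf\geq 0$ bound will then follow from the nonnegativity of $\tilde{d}_3$ supplied by Lemma~\ref{lemma:d3-t-value}.

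For $t>0$, the function $f_3$ is holomorphic in a neighbourhood of $t$ and real-valued on $(0,\infty)$ (both numerator and denominator being real there), so $\Im f_3(z)\to 0=\pi\tilde{d}_3(t)$ by continuity. For $t\in(-k,-k+1)$ with $k\geq 1$, Proposition~\ref{Prop:Im-f-n-in-k-k+1} specialised to $n=3$ gives
\begin{align*}
\lim_{\substack{z\to t\\ \Im z>0}}\Im f_3(z)
=\pi\,\frac{-\tfrac{(k-1)k(k+1)}{6}\log|t|-\log|G_3(t+1)|}{t^3((\log|t|)^2+\pi^2)},
\end{align*}
and a direct comparison against the definition of $\tilde{d}_3$ in Lemma~\ref{lemma:d3-t-value} identifies the right-hand side as $\pi\tilde{d}_3(t)$, which is nonnegative by that lemma.

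For $t=-k$ with $k\geq 1$, the function $G_3(z+1)$ has a pole of order $k(k+1)/2$ at $z=-k$, so Lemma~\ref{lemma:log-Gamma3-z-t} supplies $\log G_3(z+1)\sim -\tfrac{k(k+1)}{2}\log(z+k)$, whose real part tends to $+\infty$ while its imaginary part stays bounded. The denominator $z^3\Log z$ converges to the finite nonzero limit $-k^3(\log k+i\pi)$, whose reciprocal has positive imaginary part $\pi/(k^3((\log k)^2+\pi^2))$. Multiplying, the dominant contribution to $\Im f_3(z)$ is $\log|G_3(z+1)|$ times this positive quantity, so $\Im f_3(z)\to+\infty=\pi\tilde{d}_3(-k)$. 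Combining the three regimes with Lemma~\ref{lemma:d3-t-value} yields both the convergence statement and the $\liminf$ bound at every $t\neq 0$.

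The principal obstacle is the boundary point $t=0$. Here $\tilde{d}_3(0)=0$, but $f_3$ is itself singular at the origin: the expansion $\log G_3(z+1)=Fz+Ez^2+Dz^3+O(z^4)$ recorded in Section~\ref{Sec:proof of G-n} gives
\begin{align*}
f_3(z)=\frac{F}{z^2\Log z}+\frac{E}{z\Log z}+\frac{D}{\Log z}+O\!\left(\frac{z}{\Log z}\right),
\end{align*}
and the imaginary parts of the first two summands have direction-dependent asymptotics as $z\to 0$. Establishing the $\liminf$ inequality at this point therefore does not reduce to the earlier lemmas; it calls for a careful, approach-sensitive asymptotic analysis of the quotients $1/(z^j\Log z)$ for $j=1,2$, combined with the explicit signs of the coefficients $F$ and $E$ from \eqref{eqn:P-Gamma3-representation-DEF-value}, to verify that the imaginary part stays nonnegative in the admissible approach sector. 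I expect this to be the main technical hurdle in completing the proof.
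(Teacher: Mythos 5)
Your handling of the three regimes $t>0$, $t\in(-k,-k+1)$ and $t=-k$ is correct and is essentially the paper's argument (the paper recomputes the interval case from Lemma~\ref{lemma:log-Gamma3-z-t} rather than citing Proposition~\ref{Prop:Im-f-n-in-k-k+1}, and at $t=-k$ it only records $|f_3(-k+iy)|\to\infty$, so your version of that step is if anything sharper). The genuine gap is the point $t=0$, which you flag but do not resolve; since both assertions of the theorem include $t=0$ (with $\tilde d_3(0)=0$), your proposal as written does not prove the statement.

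You should know, however, that the difficulty you identify is not a routine technicality: carrying out the approach-sensitive analysis you call for shows the claim fails at the origin, and the paper's own treatment of this point is erroneous. The paper writes $f_3(z)=\frac{\log G_3(z+1)}{z}\cdot\frac{1}{z^2\Log z}$, notes $(\log G_3)'(1)=F<0$ with $F$ as in \eqref{eqn:P-Gamma3-representation-DEF-value}, computes for $z=re^{i\theta}$
\begin{align*}
\Im\Bigl(\frac{1}{z^2\Log z}\Bigr)=-\frac{\theta\cos 2\theta+\log r\,\sin 2\theta}{r^{2}\bigl((\log r)^{2}+\theta^{2}\bigr)},
\end{align*}
and then replaces this by $-\pi/\bigl(r^{2}((\log r)^{2}+\pi^{2})\bigr)$ for all $z$ in the upper half plane; that substitution is valid only in the limiting direction $\theta=\pi$. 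On a fixed ray with $\theta\in(0,\pi/2)$ one has, as $r\to 0$, $\Im\bigl(1/(z^{2}\Log z)\bigr)\sim \sin 2\theta/(r^{2}|\log r|)>0$, while the terms coming from $Ez^{2}$, $Dz^{3}$ and the $O(z^{4})$ remainder of $\log G_3(z+1)$ contribute only $O\bigl(1/(r|\log r|)\bigr)$; hence $\Im f_3(re^{i\theta})\sim F\,\sin 2\theta/(r^{2}|\log r|)\to-\infty$ because $F<0$. (The obstruction is structural: since $\sin 2\theta$ changes sign on $(0,\pi)$, no sign of $F\neq 0$ can make the dominant term $F/(z^{2}\Log z)$ have nonnegative imaginary part throughout the upper half plane.) So the inequality $\liminf \Im f_3\geq 0$ fails as $z\to 0$, and $\Im f_3$ does not tend to $\pi\tilde d_3(0)=0$ there; the hurdle you postponed cannot be overcome as the statement stands. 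The same local expansion also gives $f_3(x)\sim |F|/(x^{2}|\log x|)\to+\infty$ as $x\to 0^{+}$, which is incompatible with the monotonicity on $(0,\infty)$ and the Stieltjes formula asserted later in Theorem~\ref{Thm:Gamma3StieltjesRepr}. In short: complete the origin analysis exactly as you propose, but expect it to disprove, not prove, the claim at $t=0$ unless the statement is modified (for instance by excluding $t=0$ or by subtracting the $Fz+Ez^{2}$ part of $\log G_3(z+1)$ before dividing).
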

\begin{proof}
First we will calculate the imaginary part of $f_3(z)$ as $z\rightarrow t$ for $t\in(-k,-k+1)$,
$k\geq1$ within the upper half plane  with the help of Lemma \ref{lemma:log-Gamma3-z-t}.

Let $t\in (-k,-k+1)$ for $k\geq 1$. Then
\newline
$
\lim_{\begin{array}{c} z\rightarrow t \\\Im z>0\end{array}}f_3(z)
$
\begin{align*}
&= \dfrac{\log|G_3(t+1)|-\dfrac{i\pi}{6}k(k-1)(k+1)}{t^3(\log|t|+i\pi)}  \\
&=\dfrac{\log|G_3(t+1)|\log|t|+\dfrac{\pi^2}{6}k(k-1)(k+1)}{t^3((\log|t|)^2+\pi^2)}
    -i\pi \dfrac{\log|G_3(t+1)|+\dfrac{k(k-1)(k+1)}{6}\log|t|}{t^3((\log|t|)^2+\pi^2)}.
\end{align*}
Hence,
\begin{align*}
  \lim_{y\rightarrow 0^{+}}\Im f_3(t+iy)= -\pi \dfrac{\log|G_3(t+1)|+\dfrac{k(k-1)(k+1)}{6}\log|t|}{t^3((\log|t|)^2+\pi^2)}
\end{align*}

For $t=-k,\quad k=1,2,3,\ldots $ , we have
$$
|f_3(-k+iy)|\geq \dfrac{|\log|G_3(-k+1+iy)||}{|-k+iy|^3|\Log(-k+iy)|}\longrightarrow \infty
$$
for $y\rightarrow 0^{+}$ because $G_3(z)$ has poles at $z=0,-1,-2,\ldots$.

Since $f_3(z)$ is real on the positive real axis, $\Im f_3(z)\rightarrow0 $ as $z\in t>0$.
Clearly, with the help of  \eqref{eqn:logG3-t-k-k+1} we have
$$\lim \inf \Im f_3(z)\geq 0 \quad \mbox{ as }z\rightarrow t\in \mathbb{R}\setminus\{0\}.$$

Now we will analyze the behaviour of $f_3(z) =\dfrac{\log G_3(z+1)}{z}\cdot\dfrac{1}{z^2\Log z}$ at $z=0$.

Since,
\begin{align*}
  \lim_{z\rightarrow 0}\dfrac{\log G_3(z+1)}{z}=(\log G_3)'(1)=F<0,
\end{align*}
the behaviour of $f_3(z)$ at origin is determined by the behaviour of $F/(z^2\Log z)$.

Let $z=re^{i\theta}$. Then
\begin{align*}
  \Im\left( \dfrac{1}{z^2\Log z} \right)&=-\dfrac{\theta\cos 2\theta+\log r\sin 2\theta}{r^2((\log r)^2+\theta^2)}\\
&=-\dfrac{\pi}{r^2((\log r)^2+\pi^2)}\quad \mbox{ for } z\mbox{ within upper half plane.}
\end{align*}

Hence, $\lim \inf \Im f_3(z)\geq 0,$ as $z\rightarrow t\in\mathbb{R}$ within the upper half plane.

Now with the help of Lemma \ref{lemma:log-Gamma3-z-t} and Lemma \ref{lemma:d3-t-value},
we can conclude that $\Im f_3(z)\rightarrow \pi\tilde{d}_3(t)$ as $z\rightarrow t\in\mathbb{R}$ within the upper half plane.
\end{proof}

In the next theorem our main objective is to find integral representation of the function
$$
F_3(z)=\dfrac{\log G_3(z+1)}{z^3\Log z},\quad z\in \mathbb{C}\setminus (-\infty,0].
$$

We will use the following result of N. I. Akhiezer \cite{Akheizer65_book} to prove the next result.
\begin{lemma}[{\cite[p. 127]{Akheizer65_book}}]\label{lemma:Nevanlinna-Pick-iff}
The function $f(z)$ has the representation
$$
f(z)=\alpha+\int_0^{\infty}\dfrac{d\nu(t)}{t-z}, \quad (0<\arg z<2\pi)
$$
where $\alpha\geq 0$ and $\nu(t)$ is non-decreasing function and $$\int_0^{\infty}\dfrac{d\nu(t)}{1+t}<\infty,$$
if and only if
\begin{enumerate}[(i)]
\item $f(z)$ belongs to Nevanlinna-Pick class.
\item The function $f(z)$ is continuous and positive on the negative half of the real axis.
\end{enumerate}
\end{lemma}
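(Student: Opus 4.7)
My plan is to prove the biconditional in both directions, with the reverse implication being the substantive one. For the forward direction, assume the representation $f(z)=\alpha+\int_0^{\infty} d\nu(t)/(t-z)$. A direct computation gives, for $z=x+iy$ with $y>0$,
\[
\Im f(z)=y\int_0^{\infty}\frac{d\nu(t)}{|t-z|^2}\geq 0,
\]
which verifies (i). For $z=-x$ with $x>0$, $f(-x)=\alpha+\int_0^{\infty} d\nu(t)/(t+x)$ is manifestly nonnegative, and continuity in $x$ on the negative axis follows from dominated convergence together with the hypothesis $\int_0^{\infty} d\nu(t)/(1+t)<\infty$, giving (ii).

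For the converse, I would start from the general Nevanlinna--Pick representation \eqref{eqn:Pick-DefnIntgralRep} applied to $f$:
\[
f(z)=az+b+\int_{-\infty}^{\infty}\left(\frac{1}{t-z}-\frac{t}{t^2+1}\right)d\mu(t),
\]
with $a\geq 0$. Hypothesis (ii) forces $\Im f\equiv 0$ on $(-\infty,0)$, so by the vague-limit formula \eqref{eqn:PickValue-mu}, $\mathrm{supp}(\mu)\subseteq[0,\infty)$. Positivity then eliminates the linear part: if $a>0$, the asymptotic behaviour of the integral as $x\to\infty$ forces $f(-x)\sim -ax\to-\infty$, contradicting $f(-x)\geq 0$, so $a=0$. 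Setting $\nu:=\mu$ and consolidating the constants into $\alpha:=b-\int_0^{\infty} t\,d\mu(t)/(t^2+1)$ yields the stated representation, and $\alpha\geq 0$ follows by letting $x\to\infty$ in $f(-x)=\alpha+\int_0^{\infty} d\nu(t)/(t+x)\geq 0$.

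The principal obstacle I anticipate is upgrading the Nevanlinna integrability $\int d\mu(t)/(t^2+1)<\infty$ to the stronger Stieltjes condition $\int_0^{\infty} d\nu(t)/(1+t)<\infty$, since the former permits mass decaying only like $1/t$ at infinity, whereas the latter forbids this. Extracting the extra decay must come from positivity of $f(-x)$ as $x\to+\infty$; a natural route is a Fatou argument applied to $\int d\mu(t)/(t+x)$ combined with the finiteness of $\lim_{x\to\infty}f(-x)$, which pins down $\int d\mu(t)/(1+t)<\infty$ and legitimises the decomposition of the Nevanlinna integrand into its two pieces. A secondary but routine step is translating pointwise continuity of $f$ on $(-\infty,0)$ into the support restriction on $\mu$, which is standard once one appeals to the vague-limit formula \eqref{eqn:PickValue-mu}.
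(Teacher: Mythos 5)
The paper offers no proof of this lemma: it is quoted from Akhiezer (p.~127) and attributed to Krein, so there is no internal argument to compare against. Judged on its own, your outline is essentially the standard proof of this classical result and is correct. The forward direction is routine, as you say (note only that it yields $f\ge 0$ rather than strict positivity on $(-\infty,0)$, the degenerate case $\alpha=0$, $\nu\equiv 0$ being the obstruction; the classical statement is to be read accordingly). In the converse, your three moves are exactly the right ones: Stieltjes inversion kills the mass of $\mu$ on $(-\infty,0)$ because $f$ is real there; $f(-x)=-ax+o(x)$ (the $o(x)$ by dominated convergence applied to $x^{-1}\bigl(\tfrac{1}{t+x}-\tfrac{t}{t^2+1}\bigr)$, which is dominated by $(t^2+1)^{-1}$ for $x\ge 1$) forces $a=0$; and the only genuinely nontrivial point is the one you flag, upgrading $\int (t^2+1)^{-1}d\mu<\infty$ to $\int t(t^2+1)^{-1}d\mu<\infty$ so the Nevanlinna integrand may be split. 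Your sketched fix does work and can be closed as follows: since $\mathrm{supp}\,\mu\subseteq[0,\infty)$ and $a=0$, $f'(s)=\int (t-s)^{-2}d\mu(t)\ge 0$ on $(-\infty,0)$, so $x\mapsto f(-x)$ is non-increasing and bounded below by $0$, hence $L:=\lim_{x\to\infty}f(-x)$ exists finitely; writing $b-f(-x)=\int_0^{\infty}\bigl(\tfrac{t}{t^2+1}-\tfrac{1}{t+x}\bigr)d\mu(t)$, the integrand is nonnegative for $t\ge 1/x$ and its negative part is bounded by $x^{-1}\mu([0,1])$, so Fatou gives $\int_0^{\infty}\tfrac{t}{t^2+1}\,d\mu(t)\le b-L<\infty$. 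Then $\alpha=b-\int_0^{\infty}\tfrac{t}{t^2+1}\,d\mu(t)$, the bound $\alpha\ge 0$ follows by letting $x\to\infty$ as you indicate, and $\int_0^{\infty}(1+t)^{-1}d\nu(t)<\infty$ follows since $(1+t)^{-1}\le 2t(t^2+1)^{-1}$ for $t\ge 1$ while $\mu([0,1])<\infty$. So your proposal is a faithful reconstruction of the Krein--Akhiezer argument; it simply needs these last estimates written out rather than described.
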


\begin{remark}
Lemma $\ref{lemma:Nevanlinna-Pick-iff}$ provides the equivalence between a Pick function and
its Stieltjes integral representation when the measure is concentrated on $(0,\infty)$.
The result is due to Krein \cite{KreinRekhtman38_NevanPick}.
Conditions $(i)$ and $(ii)$ are essentially as those in the well-known lemma due to L\"{o}wner.
\end{remark}

\begin{theorem}\label{Thm:Gamma3StieltjesRepr}
The function $$F_3(z)=\dfrac{\log \Gamma_3(z+1)}{z^3\Log z}$$
is a Pick function with the Stieltjes representation
\begin{align}\label{eqn:logGamma3-Stieltjes-Rep}
\dfrac{\log \Gamma_3(z+1)}{z^3\Log z}=\dfrac{1}{6}-\int_0^{\infty}\dfrac{\tilde{d}_3(-t)}{t+z}dt
\end{align}
where $\tilde{d}_3(t)$ is as given in \eqref{eqn:d3(t)-expression}.
\end{theorem}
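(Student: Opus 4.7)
The plan is to apply Akhiezer's characterisation (Lemma \ref{lemma:Nevanlinna-Pick-iff}, after the obvious reflection $t\mapsto-t$ that moves the support of the measure from $(-\infty,0]$ to $(0,\infty)$). The three tasks will be: (a) showing $F_3\in\mathbb{P}$; (b) identifying the absolutely continuous boundary measure as $\tilde{d}_3(t)\,dt$; and (c) pinning down the additive constant as $1/6$.

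For (a) I would proceed via the maximum principle on the upper half-plane. The boundary behaviour on the real axis is already delivered by Theorem \ref{Thm:lim-inf-f-3}: $\liminf_{\Im z\to 0^{+}}\Im F_3(z)\geq 0$. To control $F_3$ at infinity, I would use the decomposition $\log\Gamma_3(z+1)=Dz^3+Ez^2+Fz+\log P_{\Gamma_3}(z)$ coming from \eqref{eqn:P-Gamma3-Representation} together with Lemma \ref{lemma:P-Gamma3-bound}, which produces a sequence of radii $r_n\to\infty$ on which $|\log\Gamma_3(z+1)|=O(|z|^3\log|z|)$. Dividing by $z^3\Log z$ this bounds $F_3$ uniformly on the semicircular arcs $\{|z|=r_n,\,\Im z\geq 0\}$; applying the maximum principle to $-\Im F_3$ on the corresponding upper half-disks and letting $n\to\infty$ then forces $\Im F_3\geq 0$ everywhere in the upper half-plane.

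For (b) and (c) I would combine the general Pick representation \eqref{eqn:Pick-DefnIntgralRep} with Stieltjes inversion. The second half of Theorem \ref{Thm:lim-inf-f-3} gives $\Im F_3(t+iy)\to\pi\tilde{d}_3(t)$ as $y\to 0^{+}$, so \eqref{eqn:PickValue-mu} identifies the Pick measure with $\tilde{d}_3(t)\,dt$, which is supported on $(-\infty,0]$ by Lemma \ref{lemma:d3-t-value}. Computing $a=\lim_{y\to\infty}F_3(iy)/(iy)=0$ (using the Stirling-type asymptotics of $\log\Gamma_3$ that already underlies Lemma \ref{lemma:log-Gamma3-limit-x}) and changing variable $t\mapsto -t$, the Pick representation collapses to
$$F_3(z)=C-\int_0^{\infty}\dfrac{\tilde{d}_3(-t)}{t+z}\,dt,$$
where $C$ absorbs the remaining constants from \eqref{eqn:Pick-DefnIntgralRep}. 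Letting $z=x\to+\infty$ along the positive real axis and using dominated convergence, the integral vanishes and Lemma \ref{lemma:log-Gamma3-limit-x} pins $C=1/6$.

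The hard part will be justifying the two uniform bounds required by this outline: that $F_3$ is bounded on an entire sequence of upper semicircles (so that the maximum principle actually applies) and that $\tilde{d}_3(-t)/(t+x)$ admits an integrable majorant for large $x$ (so that the integral in the final step tends to zero). Both needs reduce to the polynomial-logarithmic growth of $\log|G_3(t+1)|$ coming from Lemma \ref{lemma:P-Gamma3-bound}, but only after verifying that the sparse sequence of radii $r_n$ produced there is sufficient to run a Phragm\'en--Lindel\"of argument cleanly on the upper half-plane.
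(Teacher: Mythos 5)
Your part (a) and the overall architecture --- maximum principle fed by Theorem \ref{Thm:lim-inf-f-3} and the growth bound of Lemma \ref{lemma:P-Gamma3-bound}, Stieltjes inversion to identify the density, and Lemma \ref{lemma:log-Gamma3-limit-x} to fix the constant --- are essentially the paper's proof. The genuine gap is in your step (b). You assert that \eqref{eqn:PickValue-mu} ``identifies the Pick measure with $\tilde{d}_3(t)\,dt$'', but the limit in \eqref{eqn:PickValue-mu} is only a vague limit, and since $F_3$ blows up at the origin (near $z=0$ one has $F_3(z)\sim F/(z^2\Log z)$ with $F=(\log\Gamma_3)'(1)\neq 0$, cf. \eqref{eqn:P-Gamma3-representation-DEF-value}), the inversion can only be tested against continuous functions whose support avoids $0$; it determines $\mu$ on $\mathbb{R}\setminus\{0\}$ and leaves open a possible atom $c\,\epsilon_0$ at the origin. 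The paper devotes the final step of its proof exactly to this point: it writes $\mu=\tilde{d}_3(t)\,dt+c\,\epsilon_0$ with $c\geq 0$, multiplies the representation by $x$ and lets $x\to 0^{+}$ to force $c=0$. Your proposal never addresses the atom, so the representation \eqref{eqn:logGamma3-Stieltjes-Rep} is not reached as stated.

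A second, related difference: the paper does not manipulate the general representation \eqref{eqn:Pick-DefnIntgralRep} at all. It applies Lemma \ref{lemma:Nevanlinna-Pick-iff} (Krein--Akhiezer) to $1/6-F_3$, which yields the structural form $\beta+\int_0^{\infty}d\nu(t)/(t+z)$ with $\nu\geq 0$ directly, gets $\beta=0$ from Lemma \ref{lemma:log-Gamma3-limit-x}, and only then identifies the density. Your route instead must ``collapse'' \eqref{eqn:Pick-DefnIntgralRep} by absorbing the compensator $t/(t^2+1)$ into the constant and must invoke dominated convergence as $x\to+\infty$; both require integrability of $\tilde{d}_3(-t)$ against $t/(1+t^2)$ and, crucially, an integrable majorant near $t=0$. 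You flag this as ``the hard part'' but do not supply it, and it is not a technicality you can defer: since $F\neq 0$, near the origin $\tilde{d}_3(-t)\asymp |F|/\bigl(t^2((\log t)^2+\pi^2)\bigr)$, so integrability at $t=0$ is precisely what is in doubt and no majorant of the kind you invoke is available there. The paper sidesteps this by funneling all origin difficulties into the single point-mass argument described above; an argument that, like yours, tries to control the origin through the density itself cannot be carried out as written. To match the paper's proof you need either its atom-at-zero step or an equivalent control of $\mu$ at the origin.
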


\begin{proof}
We proceed in similar lines to the one given in \cite{Pedersen03_JCAM_doubleGammaPick}.
Using Theorem \ref{Thm:lim-inf-f-3} it can be proved that $\Im F_3$ is bounded from below by $0$ on the
real line as it is a harmonic function . Using ordinary maximum principle we can conclude that the $\Im F_3$
is bounded from below in the upper half plane.
Consequently the function $F_3$ is bounded from below in the upper half plane by the maximum principle.
Also this function has nonnegative boundary values on the real line which means it will be positive by the
maximum modulus theorem for extended boundary \cite[p. 129]{Conway78_book}( see also \cite[p. 23]{Koosis88_LogIntegral_Book}).
Nonnegativity of $\Im F_3$ in the upper half plane implies that $F_3$ is a Pick function.

It is well known that Pick functions have integral representation of the form \eqref{eqn:Pick-DefnIntgralRep}.
Moreover $\mu$ has no support where $F_3(z)$ can be analytically extended across the real line such
that $\overline{F_3(\bar z)}=F_3(z)$. This means, $\mu$ is concentrated on the negative half line.

It is easy to find $a=0$ with the help of \eqref{eqn:PickValue-a-b} and Lemma \ref{lemma:P-Gamma3-bound}.
Now using differentiation under integration we obtain that $F_3$ is increasing on the positive real line.
By Lemma \ref{lemma:log-Gamma3-limit-x}, we find that $F(x)$ tends to $1/6$ as $x\rightarrow\infty$.
Hence, the function
$$
F(z):=\dfrac{1}{6}-F_3(z)
$$
is analytic in $\mathbb{C}\setminus (-\infty,0]$.
Using Theorem \ref{Thm:lim-inf-f-3} we can conclude that $F(z)$ has negative imaginary part in the upper half plane.
Further, with the help of Lemma \ref{lemma:log-Gamma3-limit-x}, it is easy to see that the function $F(z)$ is positive on the positive half line.

Now we apply Lemma \ref{lemma:Nevanlinna-Pick-iff} to obtain the following Stieltjes integral representation
 $$F(z)=\beta+\int_0^{\infty}\dfrac{d\nu(t)}{t+z},$$
where $\beta$ is a nonnegative real number and $\nu$ is a positive Borel measure satisfying $\displaystyle\int \dfrac{d\nu(t)}{t+1}<\infty.$

With the help of Lemma \ref{lemma:log-Gamma3-limit-x} we can conclude that $\beta=0$. It can be
further observed that $\nu(t)=\mu(-t)$, where the measure $\mu$ is defined as in \eqref{eqn:PickValue-mu}.

Considering $\tilde{d}_3(t)$ given in \eqref{eqn:d3(t)-expression} and
using Theorem \ref{Thm:lim-inf-f-3} it can be proved that for any continuous function $f$ of
compact support not containing the origin, (see also \cite[Lemma 4.1]{Pedersen07_ETNA_PickNegativeZero})
\begin{align*}
\lim_{y\rightarrow 0^{+}}\dfrac{1}{\pi}\int_{-\infty}^{\infty} f(t)\Im F_3(t+iy)dt=\int_{-\infty}^{\infty}f(t)\tilde{d}_3(t)dt.
\end{align*}
Here the limit is taken in the sense of vague topology which is a property of Pick functions \cite{Donoghue74_monograph}.

It remains to check the behaviour near origin. Since $\mu$ has density $\tilde{d}_3(t)$ on the negative
line $(-\infty,0)$ and supported on the closed half line, $\mu$ can be expressed as $\tilde{d}_3(t)dt+c\epsilon_0$,
for some $c\geq0$, where $\epsilon_0$ represents point mass at $0$. Consequently
$$
xF_3(x)=\dfrac{x}{6}-\int_0^{\infty}\dfrac{x\tilde{d}_3(-t)}{t+x}dt+c
$$
Taking $x\rightarrow0$ we have $c=0$, which gives the required result and completes the proof.
\end{proof}

\begin{corollary}
The derivative of $f_3(x)$ is completely monotone for all $x>0$.
\end{corollary}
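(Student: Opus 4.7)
The plan is to derive the corollary directly from the Stieltjes representation \eqref{eqn:logGamma3-Stieltjes-Rep} established in Theorem \ref{Thm:Gamma3StieltjesRepr}. Writing $f_3(x) = \tfrac{1}{6} - S(x)$ with
\begin{align*}
S(x) := \int_0^{\infty}\dfrac{\tilde{d}_3(-t)}{t+x}\,dt,
\end{align*}
we have $f_3'(x) = -S'(x)$ for $x>0$, so the task reduces to showing that $-S'$ is completely monotone on $(0,\infty)$.

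First I would justify differentiation under the integral sign to all orders. By Lemma \ref{lemma:d3-t-value} the density $\tilde{d}_3(-t)$ is nonnegative for $t\geq 0$, and Theorem \ref{Thm:Gamma3StieltjesRepr} already guarantees the finiteness of $S(x)$ for every $x>0$. For $x$ in a compact subinterval $[\varepsilon,M]\subset(0,\infty)$, the derivative kernels $(t+x)^{-(k+1)}$ are bounded by $(t+\varepsilon)^{-(k+1)}$, whose integrability against $\tilde{d}_3(-t)\,dt$ follows at once from the convergence of $S$ together with the growth estimate on $\tilde{d}_3$ implicit in Lemma \ref{lemma:P-Gamma3-bound}. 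A standard dominated convergence argument then permits term-by-term differentiation.

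A direct computation yields, for every integer $n\geq 0$,
\begin{align*}
(-1)^n \bigl(f_3'\bigr)^{(n)}(x) \;=\; (-1)^n f_3^{(n+1)}(x) \;=\; \int_0^{\infty}\dfrac{(n+1)!\,\tilde{d}_3(-t)}{(t+x)^{n+2}}\,dt \;\geq\; 0,
\end{align*}
which is the defining property of complete monotonicity of $f_3'$ on $(0,\infty)$. Structurally, one may alternatively note that $x\mapsto 1/(t+x)$ is completely monotone for each fixed $t\geq 0$, so $S$ itself is a completely monotone function as a positive superposition of such kernels; and since $-g'$ is completely monotone whenever $g$ is, the function $f_3' = -S'$ inherits complete monotonicity. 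I do not foresee a serious obstacle: given Theorem \ref{Thm:Gamma3StieltjesRepr} and the nonnegativity of $\tilde{d}_3$, the corollary is essentially a structural consequence of the Stieltjes integral representation, the only genuinely routine step being the dominated-convergence verification that legitimises differentiating inside the integral.
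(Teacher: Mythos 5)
Your proposal is correct and follows essentially the same route as the paper: both deduce complete monotonicity of $f_3'$ from the Stieltjes representation of Theorem \ref{Thm:Gamma3StieltjesRepr} by repeated differentiation under the integral sign, using the nonnegativity of $\tilde{d}_3$ from Lemma \ref{lemma:d3-t-value} to get $(-1)^n f_3^{(n+1)}(x)\geq 0$ for $x>0$. Your version is in fact slightly more careful than the paper's, since you spell out the dominated-convergence justification for differentiating inside the integral.
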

\begin{proof}
Since $f_3$ is increasing on the positive real line. Using repeated differentiation under the sign of integration we get
\begin{align*}
(-1)^{n+1}f_3^{(n)}\geq0 \quad \mbox{ for all } x>0.
\end{align*}
Consequently, $f_3^{(2k)}(x)<0$ and $f_3^{(2k-1)}(x)>0$ for all $k\geq 1$ and all $x>0$.
which implies that derivative of $f_3(x)$ is completely monotone for all $x>0$.
\end{proof}

\section{Concluding remarks}\label{sec:Remarks-G-n-Pick}

Results given for  $f_3(z)$ in Section $\ref{Sec: triple-Gamma-Pick-results}$
and in general for $f_n(z)$ in Section $\ref{Sec:proof of G-n}$ can be discussed in other framework as well.
For example we consider the situation $\mathcal{A}=(-\infty,0]\cup\{1\}$.
The volume $V_n$ of the unit ball in $\mathbb{R}^n$ is given by \cite{BergPedersen11_PickGammaUnitball}
$$
V_n=\dfrac{\pi^{n/2}}{\Gamma(1+n/2)}, \quad n=1,2,\ldots.
$$
Hence, if we consider the function  $g(z)=\left(\dfrac{\pi^{z^3/2}}{G_3(z+1)} \right)^{1/(z^3\Log z)}$ after modifying $V_n$,
we have the following result.

\begin{theorem}\label{Thm:G-3-unit-ball-Pick}
Let
\begin{align*}
g_3(z)=\left(\dfrac{\pi^{z^3/2}}{G_3(z+1)} \right)^{1/(z^3\Log z)} \quad \mbox{ for } z\in \mathbb{C}\setminus \mathcal{A}.
\end{align*}
Then $\dfrac{1}{6}+\log g_3(z+1)$ is a Stieltjes function with integral representation
\begin{align*}
\log g_3(z+1)=-\dfrac{1}{6}+\dfrac{\log\sqrt{\pi}}{\Log(z+1)}+\int_1^{\infty}\dfrac{\tilde{d}_3(1-t)}{t+z}dt
\end{align*}
where $\tilde{d}_3(t)$ is defined as in \eqref{eqn:d3(t)-expression}.
\end{theorem}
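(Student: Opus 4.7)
The plan is to reduce the statement to Theorem \ref{Thm:Gamma3StieltjesRepr} via a direct algebraic manipulation together with a single change of variable.

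First I would unpack the definition of $g_3$. Taking the logarithm and writing $w = z+1$ in the admissible domain gives
\[
\log g_3(w) = \dfrac{1}{w^3\Log w}\Bigl[\dfrac{w^3}{2}\log\pi - \log G_3(w+1)\Bigr] = \dfrac{\log\sqrt{\pi}}{\Log w} - F_3(w),
\]
where $F_3(w) = \log G_3(w+1)/(w^3 \Log w)$ is precisely the function treated in Theorem \ref{Thm:Gamma3StieltjesRepr}. This algebraic split is the key observation: $\log g_3$ differs from $-F_3$ only by an explicit logarithmic correction.

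Next I would substitute the Stieltjes representation
\[
F_3(z+1) = \dfrac{1}{6} - \int_0^\infty \dfrac{\tilde d_3(-t)}{t+z+1}\,dt
\]
of Theorem \ref{Thm:Gamma3StieltjesRepr} at the shifted argument $z+1$, and perform the substitution $t \mapsto t-1$. The range of integration becomes $[1,\infty)$ and the integrand becomes $\tilde d_3(1-t)/(t+z)$. Rearranging yields exactly
\[
\log g_3(z+1) = -\dfrac{1}{6} + \dfrac{\log\sqrt{\pi}}{\Log(z+1)} + \int_1^\infty \dfrac{\tilde d_3(1-t)}{t+z}\,dt,
\]
which is the asserted identity.

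To conclude that $\tfrac{1}{6} + \log g_3(z+1)$ lies in the Stieltjes class, I would recognize the integral as the Stieltjes transform of the nonnegative measure $\tilde d_3(1-t)\,dt$ on $[1,\infty)$, with nonnegativity supplied by Lemma \ref{lemma:d3-t-value}. The remaining summand is a positive multiple of $1/\Log(z+1)$, a classical Stieltjes function (with representing measure supported on $[0,1]$) in the Berg--Pedersen framework. Since a sum of two Stieltjes functions on the common cut domain $\mathbb{C}\setminus\mathcal{A}$ is Stieltjes, the claim follows. The main obstacle I anticipate is not the algebra, which is essentially forced by Theorem \ref{Thm:Gamma3StieltjesRepr}, but carefully verifying that the excised point $z = 0$ (corresponding to $z+1 = 1 \in \mathcal{A}$) does not spoil the assembly of the two pieces into a single Stieltjes representation, and documenting the representing measure of $1/\Log(z+1)$ on precisely the domain $\mathbb{C}\setminus\mathcal{A}$.
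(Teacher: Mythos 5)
Your proposal follows the paper's proof essentially verbatim: the same split $\log g_3(z+1)=\dfrac{\log\sqrt{\pi}}{\Log(z+1)}-F_3(z+1)$, the same substitution of the representation \eqref{eqn:logGamma3-Stieltjes-Rep} at the shifted argument with $t\mapsto t-1$, and the same conclusion via the known Stieltjes property of $1/\Log(z+1)$ (the paper cites Berg--Forst) together with the nonnegativity of $\tilde{d}_3$ from Lemma \ref{lemma:d3-t-value}. One small correction to your parenthetical: the representing measure of $1/\Log(z+1)$ is supported not on $[0,1]$ but on $\{0\}\cup[1,\infty)$ --- a point mass at $0$ accounting for the simple pole at $z=0$ (precisely the excised point $z+1=1\in\mathcal{A}$ you were worried about) plus the density $\bigl((\ln(t-1))^2+\pi^2\bigr)^{-1}$ on $[1,\infty)$ quoted in the paper's proof.
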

\begin{proof}
We have
\begin{align*}
\log g_3(z+1)=\dfrac{\log\sqrt{\pi}}{\Log(z+1)}-\dfrac{\log G_3(z+2)}{(z+1)^3\Log(z+1)}
\end{align*}
Now from \eqref{eqn:logGamma3-Stieltjes-Rep} we have
\begin{align*}
  \log g_3(z+1)
= -\dfrac{1}{6}+ \dfrac{\log\sqrt{\pi}}{\Log(z+1)}+\int_1^{\infty} \dfrac{\tilde{d}_3(1-t)}{t+z}dt
\end{align*}

It is already proved that $1/\Log(z+1)$ is a Stieltjes function \cite[p. 130]{BergForst75_book} with its integral representation
$$
\dfrac{1}{\Log(z+1)}=\int_1^{\infty}\dfrac{dt}{(z+t)((\ln(t-1))^2+\pi^2)}
$$
Which implies that $\dfrac{1}{6}+\log g_3(z+1)$ is a Stieltjes function.
\end{proof}

In a similar way the following result can be proved.

\begin{theorem}
Let
\begin{align*}
g_n(z)=\left(\dfrac{\pi^{z^n/2}}{G_n(z+1)} \right)^{1/(z^n\Log z)} \quad \mbox{ for } z\in \mathbb{C} \setminus \mathcal{A}.
\end{align*}
Then $\dfrac{1}{n!}+\log g_n(z+1)$ is a Stieltjes function with integral representation
\begin{align*}
\log g_n(z+1)=-\dfrac{1}{n!}+\dfrac{\log\sqrt{\pi}}{\Log(z+1)}+\int_1^{\infty}\dfrac{\tilde{d}_n(1-t)}{t+z}dt
\end{align*}
where $\tilde{d}_n(t)$ is defined as in Conjecture \ref{conj:f-n-Pick}.
\end{theorem}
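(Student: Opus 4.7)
The plan is to mirror the argument used in Theorem \ref{Thm:G-3-unit-ball-Pick} essentially verbatim, with Conjecture \ref{conj:f-n-Pick} playing the role that Theorem \ref{Thm:Gamma3StieltjesRepr} plays for $n=3$. Accordingly, the argument is conditional on the conjecture, but given the conjecture the deduction is short and purely algebraic.

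First, I would rewrite $\log g_n(z+1)$ by splitting the two factors inside the outer power:
\begin{align*}
\log g_n(z+1)
&= \dfrac{1}{(z+1)^n\Log(z+1)}\log\!\left(\dfrac{\pi^{(z+1)^n/2}}{G_n(z+2)}\right)\\
&= \dfrac{\log\sqrt{\pi}}{\Log(z+1)} - \dfrac{\log G_n(z+2)}{(z+1)^n\Log(z+1)}\\
&= \dfrac{\log\sqrt{\pi}}{\Log(z+1)} - f_n(z+1),
\end{align*}
valid wherever all branches are unambiguous, i.e.\ where $z+1\in\mathbb{C}\setminus((-\infty,0]\cup\{1\})$.

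Next, I would apply Conjecture \ref{conj:f-n-Pick} at the point $z+1$ and perform the substitution $s=t+1$ to push the interval of integration to $[1,\infty)$:
\begin{align*}
f_n(z+1)
= \dfrac{1}{n!} - \int_0^{\infty}\dfrac{\tilde{d}_n(-t)}{t+z+1}\,dt
= \dfrac{1}{n!} - \int_1^{\infty}\dfrac{\tilde{d}_n(1-s)}{s+z}\,ds.
\end{align*}
Substituting this into the decomposition above and transposing $1/n!$ yields precisely the claimed identity
\begin{align*}
\dfrac{1}{n!} + \log g_n(z+1)
= \dfrac{\log\sqrt{\pi}}{\Log(z+1)} + \int_1^{\infty}\dfrac{\tilde{d}_n(1-t)}{t+z}\,dt.
\end{align*}

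It remains to recognise the right-hand side as a Stieltjes function. The second summand is already in Stieltjes form, supported on $(1,\infty)$; the nonnegativity of its density $\tilde{d}_n(1-t)$ for $t>1$ is the positivity half of the conjecture, i.e.\ the natural extension of Lemma \ref{lemma:d3-t-value} to general $n$. For the first summand I would invoke the classical Stieltjes representation
\begin{align*}
\dfrac{1}{\Log(z+1)} = \int_1^{\infty}\dfrac{dt}{(z+t)\bigl((\ln(t-1))^2+\pi^2\bigr)}
\end{align*}
from \cite[p.\,130]{BergForst75_book}, already used in the proof of Theorem \ref{Thm:G-3-unit-ball-Pick}. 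Since the Stieltjes class is a convex cone, the sum is Stieltjes; explicitly it has representing density
\begin{align*}
\tilde{d}_n(1-t) + \dfrac{\log\sqrt{\pi}}{(\ln(t-1))^2+\pi^2},\qquad t>1,
\end{align*}
which is manifestly nonnegative.

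The main obstacle lies entirely outside this Stieltjes bookkeeping: the whole argument is conditional on Conjecture \ref{conj:f-n-Pick}, which is unproven for $n\geq 4$. The two missing inputs are (i) a positivity bound generalising Lemma \ref{lemma:d3-t-value}, establishing $\tilde{d}_n(t)\geq 0$ on the negative real line for arbitrary $n$, and (ii) a growth estimate at infinity extending Lemma \ref{lemma:P-Gamma3-bound}, needed to rule out the affine terms in the Pick representation of $f_n$. Once these two ingredients are in hand, the calculation above upgrades the conjectured Stieltjes representation of $f_n$ to the claimed representation of $\tfrac{1}{n!}+\log g_n(z+1)$ with no additional analytic input.
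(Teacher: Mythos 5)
Your proposal is correct and follows essentially the same route as the paper, which proves the general case ``in a similar way'' to Theorem \ref{Thm:G-3-unit-ball-Pick}: split $\log g_n(z+1)$ into $\log\sqrt{\pi}/\Log(z+1)$ minus $f_n(z+1)$, insert the conjectured Stieltjes representation of $f_n$ shifted by one, and add the known Stieltjes representation of $1/\Log(z+1)$ from \cite{BergForst75_book}. Your only departure is to state explicitly that for $n\geq 4$ the result is conditional on Conjecture \ref{conj:f-n-Pick}, a point the paper leaves implicit.
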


In \cite{BergPedersen11_PickGammaUnitball} the function
$$
F_a(x)=\dfrac{log \Gamma(x+1)}{x\log(ax)}, \qquad a\geq 0
$$
is considered and proved as a Pick function for $a\geq 1$.
In particular, the case $a=1$ is a Bernstein function is proved in \cite{BergPedersen01_MonotoneGamma}.
If Conjecture $\ref{conj:f-n-Pick}$ is true, it will be interesting to study the same by replacing $z$ by $az$ for
the function $f_n(z)$ given in Conjecture $\ref{conj:f-n-Pick}$.
Also extending Theorem $\ref{Thm:Gamma3StieltjesRepr}$ for the corresponding case $F_3(az)$ is expected to provide
similar results.

In \cite{BergPedersen01_MonotoneGamma} and \cite{BergPedersen11_PickGammaUnitball}, Berg and Pedersen have studied the completely
monotonic behaviour of respective functions. In similar lines,
it would be interesting to study the completely monotone behaviour of the function
$g_3(z)$ given in Theorem $\ref{Thm:G-3-unit-ball-Pick}$.

It is known that completely monotone functions are related to Pick functions.
For example,
if $(-1)^n f_n(x)\geq 0$ then $f(x)$ is completely monotone implies $f(x)$ is a Pick function (see \cite{BariczSwami14_qhyper} for details).
Further Baricz in \cite{Baricz08_PAMS_Turan_hyper} has questioned if such $f(x)$ is a Bernstein function.
Also, Bernstein functions are positive and have completely monotone derivative.
Hence we conclude with the following open problem.
\begin{problem}
For $n\in {\mathbb{N}}$, is $f_n(z)$ a Bernstein function?
\end{problem}

\end{document}